\newcommand{\GW}{\mathrm{GW}}
\newcommand{\st}{s,t}
\newcommand{\br}{\mathrm{br}}
\newcommand{\lleft}{\left}
\newcommand{\rrvert}{\vert}
\newcommand{\rright}{\right}
\newcommand{\llvert}{\vert}
\newtheorem{teo}{Theorem}[section]
\newtheorem{prop}[teo]{Proposition}
\newtheorem{lem}[teo]{Lemma}
\newtheorem{cor}[teo]{Corollary}
\newcommand{\halpha}{\hat{\alpha}}
\begin{document}
\begin{frontmatter}

%\dochead{}
\title{The vertex-cut-tree of Galton--Watson trees converging to a
stable tree}
\runtitle{Vertex-cut-tree}

\begin{aug}
\author[A]{\fnms{Daphn\'{e}}~\snm{Dieuleveut}\corref{}\ead[label=e1]{daphne.dieuleveut@math.u-psud.fr}}
%\and
%\author{\fnms{}~\snm{}}
\runauthor{D. Dieuleveut}
\affiliation{Universit\'{e} Paris-Sud}
%\dedicated{}
\address[A]{Equipe de Probabilit\'{e}s,\\
\quad Statistiques et Mod\'{e}lisation\\
Universit\'{e} Paris-Sud\\
B\^{a}timent 430\\
91405 Orsay Cedex\\
France\\
\printead{e1}} %adresu isvedimo komanda gale!
%\address{}
\end{aug}

% HISTORY:
\received{\smonth{12} \syear{2013}}
\revised{\smonth{5} \syear{2014}}
%\accepted{\smonth{} \syear{}}

% ABSTRACT
%
\begin{abstract}
We consider a fragmentation of discrete trees where the internal
vertices are deleted
independently at a rate proportional to their degree. Informally, the
associated cut-tree represents the genealogy of the nested connected
components created by this process. We essentially work in the setting
of Galton--Watson trees with offspring distribution belonging to the
domain of attraction of a stable law of index $\alpha\in(1,2)$. Our
main result is that, for a sequence of such trees $\mathcal{T}_n$ conditioned
to have size $n$, the corresponding rescaled cut-trees converge in
distribution to the stable tree of index $\alpha$, in the sense
induced by the Gromov--Prokhorov topology. This gives an analogue of a
result obtained by Bertoin and Miermont in the case of Galton--Watson
trees with finite variance.
\end{abstract}

% KEYWORDS
% Pirmas kwd is didziosios raides
%
\begin{keyword}[class=AMS]
%\kwd[Primary ]{}
\kwd{60F05}
\kwd{60J80}
%\kwd[; secondary ]{}
\end{keyword}
\begin{keyword}
\kwd{Galton--Watson tree}
\kwd{cut-tree}
\kwd{stable continuous random tree}
\end{keyword}
\end{frontmatter}

%s1 #&#
\section{Introduction and main result}\label{sec1}

Fragmentations of random trees were first introduced in the work of
case of Meir and Moon \cite{MM} as a recursive random edge-deletion
process on discrete trees. Since then, it has been recognized that
fragmentations of discrete and continuous trees appear in several
natural contexts; see, for example, \cite{BerFires,DroSch} for a
connection with forest fire models, \cite{AldPit,BerFragMB} for
fragmentations of the Brownian tree \cite{AldCRT3} and its relation to
the additive coalescent, and \cite{AbrDV,Mi03,Mi05} for fragmentations
of the stable tree of index $\alpha\in(1,2)$ \cite{DuqLG02}. The
fragmentations considered in the two last cases, which arise naturally
in the setting of Brownian and stable trees, are self-similar
fragmentations as studied by Bertoin \cite{BerFragAS}, whose
characteristics are explicitly known.

Several recent articles investigated the question of the asymptotic
distribution of the number of cuts needed to isolate a specific vertex,
for various classes of random trees. In specific cases, Panholzer \cite
{Pan06} showed that the Rayleigh distribution arises naturally as a
limit in this context, and Janson \cite{Jan} showed that this limiting
result holds for general Galton--Watson trees with a finite variance
offspring distribution, using a method of moments. He also established
a connection to the Brownian tree, which is natural since the Rayleigh
distribution is the law of the distance between two uniformly chosen
vertices in the CRT. Later, Addario-Berry, Broutin and Holmgren \cite
{ABBH} provided a different proof giving a more concrete connection to
the Brownian tree. Bertoin and Miermont \cite{BerMi} then studied the
genealogy of the cutting procedure in itself, which is related to the
problem of the isolation of several vertices rather than just the root
(certain of these ideas were implicitly present in former papers,
including \cite{ABBH,BerFires}). This allows to code the discrete
cutting procedure in terms of a ``cut-tree,'' whose scaling limit is
shown to be a Brownian tree that describes in some sense the genealogy
of the Aldous--Pitman fragmentation~\cite{AldPit}.

Note that the results of \cite{ABBH}, by introducing a reversible
transformation of the Brownian tree, can be understood as building the
``first branch'' of the limiting cut-tree, the latter being a kind of
iteration \textit{ad libitum} of this transformation. This
transformation was extended in \cite{AbrDFARPLT} in the context of a
fragmentation of stable trees. The main goal of the present work is to
show that the approach of Bertoin and Miermont \cite{BerMi} can also
be adapted to Galton--Watson trees with offspring distribution in the
domain of attraction of a non-Gaussian stable law, showing the
convergence of the whole discrete cut-tree to a limiting stable tree.
This gives in passing a natural definition of the continuum cut-tree for
the fragmentation studied in \cite{Mi05}.

Let us describe more precisely the result of \cite{BerMi} we are
interested in. Consider a sequence of Galton--Watson trees $\mathcal{T}_n$,
conditioned to have exactly $n$ edges, with critical offspring
distribution having finite variance $\sigma^2$. The associated
cut-trees $\operatorname{Cut}(\mathcal{T}_n)$ describe the genealogy
of the fragments
obtained by deleting the edges in a uniform random order. It is well
known that the rescaled trees $(\sigma/\sqrt{n}) \cdot\mathcal{T}_n$
converge in distribution to the Brownian tree $\mathcal{T}$; see \cite
{AldCRT3} for the convergence of the associated contour functions,
which implies that this convergence holds for the commonly used
Gromov--Hausdorff topology, and for the Gromov--Prokhorov topology. In the
present work, we will mainly use the latter. Bertoin and Miermont
showed that there is in fact the joint convergence
\[
\biggl(\frac{\sigma}{\sqrt{n}} \mathcal{T}_n, \frac{1}{\sigma
\sqrt{n}}
\operatorname{Cut}(\mathcal{T}_n) \biggr) \mathop{ \longrightarrow}_{n
\rightarrow\infty}^{(d)}\, \bigl(\mathcal{T}, \operatorname{Cut}
(\mathcal{T}) \bigr),
\]
where $\operatorname{Cut}(\mathcal{T})$ is the so-called cut-tree of
$\mathcal{T}$. Informally,
$\operatorname{Cut}(\mathcal{T})$ describes the genealogy of the
fragments obtained by
cutting $\mathcal{T}$ at points chosen according to a~Poisson point
process on
its skeleton. Moreover, $\operatorname{Cut}(\mathcal{T})$ has the
same law as~$\mathcal{T}$. % The
%result of \cite{BerMi} was given for trees conditioned to have $n$
%vertices instead of $n$ edges, this does not change anything.

Our goal is to show an analogue result in the case where the $\mathcal{T}_n$
are Galton--Watson trees with offspring distribution belonging to the
domain of attraction of a stable law of index $\alpha\in(1,2)$, and
$\mathcal{T}$ is the stable tree of index $\alpha$. For the stable
tree, a
self-similar fragmentation arises naturally by splitting at branching
points with a rate proportional to their ``width,'' as shown in \cite
{Mi05}. This will lead us to modify the edge-deletion mechanism for the
discrete trees, so that the rate at which internal vertices are removed
increases with their degree. Therefore, we call \emph
{edge-fragmentation} the fragmentation studied in \cite{BerMi}, and
\emph{vertex-fragmentation} our model. Note that more general
fragmentations of the stable tree can be constructed by splitting both
at branching points and at uniform points of the skeleton, as in \cite
{AbrDV}. However, these fragmentations are not self-similar (see \cite
{Mi05}), and will not be studied here.

In the rest of the \hyperref[sec1]{Introduction}, we will describe our setting more
precisely and give the exact definition of the cut-trees, both in the
discrete and the continuous cases. This will enable us to state our
main results in Section~\ref{SThm}.

%s1.1 #&#
\subsection{Vertex-fragmentation of a discrete tree} \label{SFragTn}

We begin with some notation. Let~$\mathbb{T}$ be the set of all finite
plane rooted trees. For every $T \in\mathbb{T}$, we call $E (T)$ the
set of edges of $T$, $V (T)$ the set of vertices of $T$, and $\rho(T)$
the root-vertex of $T$. For each vertex $v \in V (T)$, $\deg(v,T)$
denotes the number of children of $v$ in $T$ (or $\deg v$, if this
notation is not ambiguous), and for each edge $e \in E (T)$, $e^-$
(resp.,~$e^+$) denotes the extremity of $e$ which is closest to (resp.,
furthest away from) the root.

For any tree $T$ with $n$ edges, we label the vertices of $T$ by $v_0,
v_1, \ldots, v_n$, and the edges of $T$ by $e_1, \ldots, e_n$, in the
depth-first order. Note that the planar structure of $T$ gives an order
on the offspring of each vertex, say ``from left to right,'' hence the
depth-first order is well defined. With this notation, we have $v_j =
e_j^+$ for all $j \in\{1,\ldots,n\}$.

We let $T \in\mathbb{T}$ be a finite tree with $n$ edges. We consider
a discrete-time fragmentation on $T$, which can be described as follows:
\begin{itemize}
\item at each step, we mark a vertex of $T$ at random, in such a way
that the probability of marking a given vertex $v$ is proportional to
$\deg v$;
\item when a vertex $v$ is marked, we delete all the edges $e$ such
that $e^- = v$.
\end{itemize}
Note that the total number of steps $N$ is at most $n$. To keep track
of the genealogy induced by this edge-deletion process, we introduce a
new structure called the cut-tree of $T$, denoted by $\operatorname{Cut}_{\mathrm{v}}(T)$.

For all $r \in\{1,\ldots,N\}$, we let $v (r)$ be the vertex which
receives a mark at step~$r$, $E_r = \{ e \in E(T)\dvtx  e^- = v (r)
\}$ be the set of the edges which are deleted at step~$r$, $k_r =
\llvert E_r\rrvert$, and $D_r = \{ i \in\{1,\ldots,n\}\dvtx  e_i \in\bigcup
_{r' \leq r} E_{r'} \}$. We say that $j \sim_r j'$ if and only
if $e_j$ and $e_{j'}$ are still connected in the forest obtained from
$T$ by deleting the edges in $D_r$. Thus, $\sim_r$ is an equivalence
relation on $\{1,\ldots,n\} \setminus D_r$. The family of the
equivalence classes (without repetition) of the relations $\sim_r$ for
$r = 1,\ldots,N$ forms the set of internal nodes of $\operatorname{Cut}_{\mathrm{v}}(T)$. The
initial block $\{1,\ldots,n\}$ is seen as the root, and the leaves of
$\operatorname{Cut}_{\mathrm{v}}(T)$ are given by $1, \ldots, n$. We stress that we distinguish
the leaves $i$ and the internal nodes $\{i\}$.

We now build the cut-tree $\operatorname{Cut}_{\mathrm{v}}(T)$ inductively. At the $r$th step,
we let $B$ be the equivalence class for $\sim_{r-1}$ containing the
indices $i$ such that $e_i \in E_r$. Deleting the edges in $E_r$ splits
the block $B$ into $k'_r$ equivalence classes $B_1, \ldots, B_{k'_r}$
for $\sim_r$, with $k'_r \leq k_r +1$. We draw $k'_r$ edges between
$B$ and the sets $B_1, \ldots, B_{k'_r}$, and $k_r$ edges between $B$
and the leaves $i$ such that $e_i \in E_r$. Thus, the graph-distance
between the leaf $i$ and the root in $\operatorname{Cut}_{\mathrm{v}}(T)$ is the number of cuts
in the component of $T$ containing the edge $e_i$ before $e_i$ itself
is removed. Note that $\operatorname{Cut}_{\mathrm{v}}(T)$ does not have a natural planar
structure, but that the actual embedding does not intervene in our
work. Figure~\ref{FCutT} gives an example of this construction for a
tree $T$ with 16 edges.

If $T$ is a random tree, the fragmentation of $T$ and the cut-tree
$\operatorname{Cut}_{\mathrm{v}}(T)$ are defined similarly, by conditioning on $T$ and
performing the above construction.

Note that, equivalently, we could mark the edges of $T$ in a uniform
random order, and delete all the edges $e$ such that $e^- = e_i^-$, as
soon as $e_i$ is marked. The cut-tree $\operatorname{Cut}_{\mathrm{v}}(T)$ would then be
obtained by performing the same construction with $E_r = \{ e \in
E(T)\dvtx  e^- = e_{i_r}^-\}$. This procedure sometimes adds ``neutral
steps,'' which have no effect on the fragmentation, but this does not
change the cut-tree. It will sometimes be more convenient to work with
this point of view, for example, in Sections~\ref{SModDist} and~\ref{SBrownianCase}.

%f1 #&#
%
\begin{figure}%[t]

\includegraphics{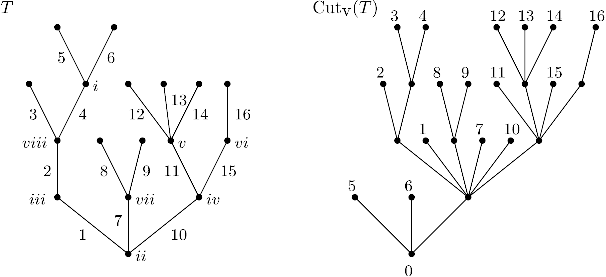}

\caption{The cut-tree $\operatorname{Cut}_{\mathrm{v}}(T)$ of a tree $T$. The order of deletion
of the internal vertices of $T$ is indicated in Roman numerals. The
correspondence between the edges of $T$ and the leaves of $\operatorname{Cut}_{\mathrm{v}}(T)$
is indicated in Arabic numerals.}\label{FCutT}
\end{figure}

%s1.2 #&#
\subsection{Fragmentation and cut-tree of the stable tree of index \texorpdfstring{$\alpha\in(1,2)$}{$alpha in(1,2)$}} \label{SFragT}

Following Duquesne and Le Gall (see, e.g., \cite{DuqLG05}), we see
stable trees as random rooted $\mathbb{R}$-trees.

%de1.1 #&#
%
\begin{defn}
A metric space $(T,d)$ is an $\mathbb{R}$-tree if, for every $u,v \in T$:
\begin{itemize}
\item There exists a unique isometric map $f_{u,v}$ from $[0,d(u,v)]$
into $T$ such that $f_{u,v}(0) = u$ and $f_{u,v}(d(u,v)) = v$.
\item For any continuous injective map $f$ from $[0,1]$ into $T$, such
that $f(0)=u$ and $f(1)=v$, we have
\[
f\bigl([0,1]\bigr) = f_{u,v} \bigl(\bigl[0,d(u,v)\bigr]\bigr):=
[\![ u, v ]\!].
\]
\end{itemize}
A rooted $\mathbb{R}$-tree is an $\mathbb{R}$-tree $(T,d,\rho)$ with
a distinguished
point $\rho$ called the root.
\end{defn}

The trees we will work with can be seen as $\mathbb{R}$-trees coded by
continuous functions from $[0,1]$ into $\mathbb{R}_+$, as in \cite{DuqLG05}.
In particular, the stable tree $(\mathcal{T},d)$ of index $\alpha$ is
the $\mathbb{R}
$-tree coded by the excursion of length $1$ of the height process
$H^{(\alpha)}$, defined as follows in \cite{DuqLG02}. Let $X^{(\alpha
)}$ be a stable spectrally positive L\'{e}vy process with parameter
$\alpha$, whose normalization will be prescribed in Section~\ref{SLimLocale}. For every $t >0$, let $\widehat{X}{}^{(\alpha,t)}$ be the
process defined by
\begin{eqnarray*}
\widehat{X}{}^{(\alpha,t)}_s = \cases{ X^{(\alpha)}_t
- X^{(\alpha)}_{(t-s)^-}, &\quad if $0 \leq s < t$,
\vspace*{2pt}\cr
X^{(\alpha)}_t, &\quad if $s=t$,}
\end{eqnarray*}
and write $\hat{S}^{(\alpha,t)}_s = \sup_{0 \leq r \leq s} \hat
{X}^{(\alpha,t)}_r$ for all $r \in[0,t]$.

%de1.2 #&#
%
\begin{defn}
The height process $H^{(\alpha)}$ is the real-valued process such that
$H_0 = 0$ and, for every $t>0$, $H_t$ is the local time at level $0$ at
time $t$ of the process~$\widehat{X}{}^{(\alpha,t)}-\hat{S}^{(\alpha,t)}$.
\end{defn}

The normalization of local time, and the proof of the existence of a
continuous modification of this process, are given in \cite{DuqLG02}, Section~1.2. This definition of $\mathcal{T}$ allows us to introduce
the canonical projection $p\dvtx [0,1] \rightarrow\mathcal{T}$. We endow
$\mathcal{T}$ with
a probability mass-measure $\mu$ defined as the image of the Lebesgue
measure on $[0,1]$ under $p$, and say that the rot of $\mathcal{T}$ is the
unique point which has height $0$.

For the fragmentation of the stable tree, we will use a process
introduced and studied by Miermont in \cite{Mi05}, which consists in
deleting the nodes of $\mathcal{T}$ in such a way that the
fragmentation is
self-similar. We first recall that the multiplicity of a point $v$ in
an $\mathbb{R}$-tree $T$ can be defined as the number of connected components
of $T \setminus\{v\}$. To be consistent with the definitions of
Section~\ref{SFragTn}, we define the degree of a point as its
multiplicity minus $1$, and say that a branching point of $T$ is a
point $v$ such that $\deg(v,T) \geq2$. Duquesne and Le Gall have shown
in \cite{DuqLG05}, Theorem 4.6, that $\mbox{a.s.}$ the branching
points in $\mathcal{T}
$ form a countable set, and that these branching points have infinite
degree. We let $\mathcal{B}$ denote the set of these branching points.
For any $b \in\mathcal{B}$, one can define the local time, or width
of $b$ as the almost sure limit
\[
L (b) = \lim_{\varepsilon\rightarrow0^+} \varepsilon^{-1} \mu\bigl\{ v
\in\mathcal{T}\dvtx  b \in[\![ \rho, v ]\!], d (b,v) < \varepsilon
\bigr\},
\]
where $\rho$ is the root of the stable tree $\mathcal{T}$. The
existence of
this quantity is justified in \cite{Mi05}, Proposition 2, (see also
\cite{DuqLG05}).

We can now describe the fragmentation we are interested in.
Conditionally on~$\mathcal{T}$, we let $(t_i,b_i)_{i \in I}$ be the family
(indexed by a countable set $I$) of the atoms of a Poisson point
process with intensity $dt \otimes\sum_{b \in\mathcal{B}} L (b)
\delta_b (dv)$ on $\mathbb{R}_+ \times\mathcal{B}$.\vspace*{2pt} Seeing these
atoms as
marks on the branching points of $\mathcal{T}$, we let $\overline
{\mathcal{T}} (t) = \mathcal{T}
\setminus\{b_i\dvtx  t_i \leq t\}$.

For every $x \in\mathcal{T}$, we let $\mathcal{T}_x (t)$ be the
connected component of
$\overline{\mathcal{T}} (t)$ containing~$x$, with the convention that
$\mathcal{T}_x (t) =
\varnothing$ if $x \notin\overline{\mathcal{T}} (t)$. We\vspace*{1pt} also let
$\mu_x
(t) = \mu
(\mathcal{T}_x (t))$. Adding a distinguished point $0$ to $\mathcal
{T}$, we define a
function $\delta$ from $(\mathcal{T}\sqcup\{0\})^2$ into $\mathbb
{R}_+ \cup\{\infty
\}$, such that for all $x,y \in\mathcal{T}$,
\begin{eqnarray*}
\delta(0,0) & =& 0, \qquad\delta(0,x) = \delta(x,0) = \int_0^{\infty}
\mu_x (t) \,dt,
\\
\delta(x,y) &=& \int_{t (x,y)}^{\infty} \bigl(
\mu_x (t) + \mu_y (t) \bigr) \,dt,
\end{eqnarray*}
where $t (x,y):= \inf\{t \in\mathbb{R}_+\dvtx  \mathcal{T}_x (t) \neq
\mathcal{T}_y (t)\}$ is
$\mbox{a.s.}$ finite. We think of $\delta$ as our new ``distance'' in the
cut-tree. This definition might seem surprising, but the results of
Section~\ref{SModDist} will show that it provides an analogue of the
distance we defined in the discrete case, in terms of number of cuts;
as will be explained in Section~\ref{SEqldeltad}, it also has a
natural interpretation as a time-change between two fragmentation
processes of the stable tree, studied in \cite{Mi03} and \cite{Mi05}.
The role of the extra point $0$ in our (time-changed) fragmentation
will be similar to the role played by the root of $\mathcal{T}$ in the
``fragmentation at heights'' which will be introduced in Section~\ref{SEqldeltad}.

A first idea would be to build the vertex-cut-tree $\operatorname{Cut}_{\mathrm{v}}(\mathcal
{T})$ as a
completion of $(\mathcal{T}\sqcup\{0\}, \delta)$. However, making
this idea
rigorous is difficult, since it is not clear whether $\delta$ is
$\mbox{a.s.}
$ finite, and defines a distance on $\mathcal{T}\sqcup\{0\}$. We will instead
use an approach introduced by Aldous, which consists in building a
continuous random tree such that the subtrees determined by $k$
randomly chosen leaves have the right distribution. To this end, we use
the conditions given by Aldous in \cite{AldCRT3}, Theorem~3.

Set $\xi(0) = 0$, and let $(\xi(i))_{i \in\mathbb{N}}$ be an i.i.d.
sequence distributed according to $\mu$, conditionally on $\mathcal
{T}$. The
key argument of our construction is the identity in law
\[
\bigl( \delta\bigl(\xi(i), \xi(j)\bigr) \bigr)_{i,j \geq0} \stackrel{(d)}
{=} \bigl( d \bigl(\xi(i+1), \xi(j+1)\bigr) \bigr)_{i,j \geq0},
\]
which will be proven in Section~\ref{SEqldeltad}. In particular, it
implies that almost surely, for all $i,j \geq0$, $\delta(\xi(i), \xi
(j))$ is finite, and that $\delta$ is $\mbox{a.s.}$ a distance on $\{
\xi(i),
i \geq0\}$. This allows us to see the spaces $\mathcal{R} (k):= (\{
\xi(i), 0 \leq i \leq k\}, \delta)$, for all $k \in\mathbb{N}$, as random
rooted trees with $k$ leaves. Using the terminology of Aldous,
$(\mathcal{R} (k), k \in\mathbb{N})$ forms a \emph{consistent}
family of
random rooted trees which satisfies the \emph{leaf-tight condition}:
\[
\min_{1 \leq j \leq k} \delta\bigl(\xi(0), \xi(j)\bigr) \mathop{ \longrightarrow}_{k \rightarrow\infty}^{\mathbb{P}} 0.
\]
Indeed, the second part of Theorem~3 of \cite{AldCRT3} shows that
these conditions hold for the reduced trees $( \{\xi(i), 1 \leq i \leq
k+1\}, d )$. As a consequence, the family $(\mathcal{R} (k), k \in
\mathbb{N}
)$ can be represented as a continuous random tree $\operatorname{Cut}_{\mathrm{v}}(\mathcal
{T})$, and $(
\delta(\xi(i), \xi(j)) )_{i,j \geq0}$ is the matrix of mutual
distances between the points of an i.i.d. sample of
$\operatorname{Cut}_{\mathrm{v}}(\mathcal{T})$.
This tree $\operatorname{Cut}_{\mathrm{v}}(\mathcal{T})$ is called the cut-tree of $\mathcal
{T}$. Note that
$\operatorname{Cut}_{\mathrm{v}}(\mathcal{T})$ depends on $\mathcal{T}$ and on the extra
randomness of the
Poisson process.

%s1.3 #&#
\subsection{Fragmentation and cut-tree of the Brownian tree} \label{SFragBr}

We will also work on the Brownian tree $(\mathcal{T}^{\br}, d^{\br},
\rho^{\br})$,
which was defined by Aldous (see \cite{AldCRT3}) as the \mbox{$\mathbb{R}$-}tree
coded by $(H_t)_{0 \leq t \leq1} = (2 B_t)_{0 \leq t \leq1}$, where
$B$ denotes the standard Brownian excursion of length $1$. This tree
can be seen as the stable tree of index $\alpha= 2$ (up to a scale
factor, with the normalization we will use). In particular, we have a
probability mass-measure $\mu^{\br}$ on $\mathcal{T}^{\br}$, defined as
the image
of the Lebesgue measure on $[0,1]$ under the canonical projection. We
also define a length-measure $l$ on $\mathcal{T}^{\br}$, which is the
sigma-finite
measure such that, for all $u,v \in\mathcal{T}^{\br}$, $l ( [\![ u, v ]\!]) = d^{\br} (u,v)$.

The fragmentation of the Brownian tree we consider is the same as in
\cite{BerMi}: conditionally on $\mathcal{T}^{\br}$, we let
$(t_i,b_i)_{i \in I}$
be the family of the atoms of a Poisson point process with intensity
$dt \otimes l(dv)$ on $\mathbb{R}_+ \times\mathcal{T}^{\br}$. As for
the stable tree, we
let $\mathcal{T}^{\br}_x (t)$ be the connected component of $\mathcal
{T}^{\br}\setminus\{b_i\dvtx
t_i \leq t\}$, and $\mu^{\br}_x (t) = \mu^{\br} (\mathcal{T}^{\br}_x
(t))$, for
every $x \in\mathcal{T}^{\br}$. Adding a distinguished point $0$ to
$\mathcal{T}^{\br}$, we
define a function $\delta^{\br}$ on $(\mathcal{T}^{\br}\sqcup\{0\})^2$
such that
for all $x,y \in\mathcal{T}^{\br}$,
\begin{eqnarray*}
\delta^{\br} (0,0) &=& 0, \qquad\delta^{\br} (0,x) =
\delta^{\br} (x,0) = \int_0^{\infty} \mu
^{\br}_x (t) \,dt,
\\
\delta^{\br} (x,y) &=& \int_{t^{\br} (x,y)}^{\infty}
\bigl( \mu^{\br}_x (t) + \mu^{\br}_y
(t) \bigr) \,dt,
\end{eqnarray*}
where $t^{\br} (x,y):= \inf\{t \in\mathbb{R}_+\dvtx  \mathcal{T}^{\br}_x
(t) \neq\mathcal{T}^{\br}_y
(t)\}$ is $\mbox{a.s.}$ finite. As shown in \cite{BerMi}, we can
define a new
tree $\operatorname{Cut}(\mathcal{T}^{\br})$ for which the matrix of
mutual distances between
the points of an i.i.d. sample of $\operatorname
{Cut}(\mathcal{T}^{\br})$ is $(\delta(\xi(i),
\xi(j)) )_{i,j \geq0}$,\vspace*{1pt} where $\xi(0) = 0$ and $(\xi(i))_{i \in
\mathbb{N}
}$ is an i.i.d. sequence distributed according to $\mu^{\br}$,
conditionally on~$\mathcal{T}^{\br}$. Moreover, $\operatorname
{Cut}(\mathcal{T}^{\br})$ has the same law as~$\mathcal{T}^{\br}$.

%s1.4 #&#
\subsection{Main results} \label{SThm}

As stated in the \hyperref[sec1]{Introduction}, we mainly work in the setting of
Galton--Watson trees with critical offspring distribution $\nu$, where
$\nu$ is a probability distribution belonging to the domain of
attraction of a stable law of index $\alpha\in(1,2) $. We shall also
assume that $\nu$ is aperiodic. Finally, for a technical reason, we
will need the additional hypothesis
%
%e1 #&#
%
\begin{equation}
\label{HMajPZ=r} \sup_{r \geq1} \biggl(\frac{r \mathbb{P}(\hat{Z} =
r)}{\mathbb
{P}(\hat{Z} > r)} \biggr) <
\infty,
\end{equation}
where $\hat{Z}$ is a random variable such that $\mathbb{P}(\hat
{Z}=r) = r
\nu(\{r\})$. For example, this is the case if $\nu(\{r\})$ is
equivalent to $c/r^{\alpha+ 1}$ as $n \rightarrow\infty$, for a
constant $c \in(0,\infty)$. In all our work, we shall implicitly work
for values of $n$ such that, for a Galton--Watson tree $T$ with
offspring distribution $\nu$, $\mathbb{P} (\llvert
E(T)\rrvert=n ) \neq0$. We let $\mathcal{T}
_n$ be a $\nu$-Galton--Watson tree, conditioned to have exactly $n$
edges. We let $\delta_n$ denote the graph-distance on $\{0,1,\ldots,n\}
$ induced by $\operatorname{Cut}_{\mathrm{v}}(\mathcal{T}_n)$. We will use the notation
$\rho_n$
for the root of $\mathcal{T}_n$, and $\mu_n$ for the uniform
distribution on
$E (\mathcal{T}_n)$ (by slight abuse, $\mu_n$ will also sometimes be
used for
the uniform distribution on $\{1, \ldots, n\}$).

Our main goal is to study the asymptotic behavior of $\operatorname{Cut}_{\mathrm{v}}(\mathcal
{T}_n)$ as
$n \rightarrow\infty$. To this end, it will be convenient to see
trees as pointed metric measure spaces, and work with the Gromov--Prokhorov
topology on the set of (equivalence classes of) such spaces. Let us
recall a few definitions and facts on these objects (see, e.g., \cite
{GPW08} for details).

A pointed metric measure space is a quadruple $(X, D, m, x)$, where $m$
is a Borel probability measure on the metric space $(X,D)$, and $x$ is
a point of $X$. These objects are considered up to a natural notion of
isometry-equivalence. One says that a sequence $(X_n, D_n, m_n, x_n)$
of pointed measure metric spaces converges in the Gromov--Prokhorov
sense to $(X_{\infty}, D_{\infty}, m_{\infty}, x_{\infty})$ if and
only if the following holds: for $n \in\mathbb{N}\cup\{\infty\}$,
set $\xi
_n(0) = x_n$ and let $\xi_n(1), \xi_n(2), \ldots$ be a sequence of
i.i.d. random variables with law $m_n$, then the vector\break
$(D_n(\xi
_n(i), \xi_n(j))\dvtx  0 \leq i, j \leq k)$ converges in distribution to
$(D_{\infty} (\xi_{\infty} (i);\break \xi_{\infty} (j))\dvtx  0 \leq i, j
\leq k)$ for every $k \geq1$. The space $\mathbb{M}$ of
(isometry-equivalence classes of) pointed measure metric spaces,
endowed with the Gromov--Prokho\-rov topology, is a Polish space.

In this setting, the stable tree $\mathcal{T}$ with index $\alpha$
can be seen
as a scaling limit of the Galton--Watson trees $\mathcal{T}_n, n \in
\mathbb{N}$. More
precisely, we endow the discrete trees $\mathcal{T}_n$ with the associated
graph-distance $d_n$ and the uniform distribution $m_n$ on $V(\mathcal{T}_n)
\setminus\{\rho_n\}$. Note that $m_n$ is uniform on $\{v_1 (\mathcal{T}
_n),\ldots,v_n(\mathcal{T}_n)\}$; by slight abuse, it will sometimes be
identified with the uniform distribution on $\{1,\ldots,n\}$. For any
pointed metric measure space $\mathbf{X} = (X,D,m,x)$ and any $a \in
(0,\infty)$, we let $a \mathbf{X} = (X,a D,m,x)$. With this
formalism, there exists a sequence $(a_n)_{n \in\mathbb{N}}$ such that
%
%e2 #&#
%
\begin{equation}
\label{ECvTn} \frac{a_n}{n} \mathcal{T}_n \mathop{ \longrightarrow} ^{(d)} \mathcal{T},
\end{equation}
in the sense of the Gromov--Prokhorov topology, and $a_n = n^{1/\alpha}
f(n)$ for a slowly-varying function $f$. This is a consequence of the
convergence of the contour functions associated with the trees
$\mathcal{T}_n$,
shown in \cite{Duq}, Theorem 3.1. We will give a slightly more precise
version of this result in Section~\ref{SCodingTrees}.

We can now state our main result.

%th1.3 #&#
%
\begin{teo}\label{TMainThm}
Let $(a_n)_{n \in\mathbb{N}}$ be a sequence such that (\ref{ECvTn}) holds.
Then we have the following joint convergence in distribution:
\[
\biggl(\frac{a_n}{n} \mathcal{T}_n, \frac{a_n}{n}
\operatorname{Cut}_{\mathrm{v}}(\mathcal{T}_n) \biggr) \mathop{ \longrightarrow}_{n \rightarrow\infty}\, \bigl(\mathcal{T}, \operatorname{Cut}_{\mathrm{v}}(\mathcal{T})
\bigr),
\]
where $\mathbb{M}$ is endowed with the Gromov--Prokhorov topology and
$\mathbb{M} \times\mathbb{M}$ has the associated product topology.
Furthermore, the cut-tree $\operatorname{Cut}_{\mathrm{v}}(\mathcal{T})$ has the same
distribution as
$\mathcal{T}$.
\end{teo}

Note that this generalizes Proposition 1.4 of \cite{AbrDBC&GWT}, which
gave the scaling limit of the number of cuts needed to isolate the root
in a stable Galton--Watson tree.

In the following sections, we fix the sequence $(a_n)$. For some of the
preliminary results, we will use a particular choice of this sequence,
detailed in Section~\ref{SLimLocale}. Nevertheless, it is easy to
check that the theorem holds for any equivalent sequence.

To complete this result, we will study the limit of the cut-tree
obtained for the vertex-fragmentation, in the case where the offspring
distribution $\nu$ has finite variance (still assuming that $\nu$ is
critical and aperiodic). More precisely, we will show the following.

%th1.4 #&#
%
\begin{teo} \label{TBrownianCase}
If the offspring distribution $\nu$ has finite variance $\sigma^2$,
then we have the joint convergence in distribution
\[
\biggl(\frac{\sigma}{\sqrt{n}} \mathcal{T}_n, \frac{1}{\sqrt{n}} \biggl(
\sigma+\frac{1}{\sigma} \biggr) \operatorname{Cut}_{\mathrm{v}}(\mathcal{T}_n)
\biggr) \mathop{ \longrightarrow}_{n \rightarrow\infty}\, \bigl(\mathcal{T}^{\br},
\operatorname{Cut}\bigl(\mathcal{T}^{\br}\bigr) \bigr)
\]
in $\mathbb{M} \times\mathbb{M}$.
\end{teo}

Let us explain informally why we get a factor $\sigma+1/\sigma$,
instead of the $1/\sigma$ we had in the case of the
edge-fragmentation. In the vertex-fragmentation, the average number of
deleted edges at each step is roughly $\sum_k k \nu(k) \times k =
\sigma^2 + 1$. Thus, the edge-deletions happen $\sigma^2+1$ times
faster than for the edge-fragmentation. As a consequence, $(1/\sqrt
{n}) \cdot\,\operatorname{Cut}_{\mathrm{v}}(\mathcal{T}_n)$ behaves\vspace*{1pt} approximatively like
$(1/(\sigma
^2+1) \sqrt{n}) \cdot\,\operatorname{Cut}(\mathcal{T}_n)$, that is,
$(\sigma+ 1/\sigma
)^{-1}(1/\sigma\sqrt{n}) \cdot\,\operatorname{Cut}(\mathcal{T}_n)$.

Also note that we would need additional hypotheses to extend this
result to the more general case of an offspring distribution belonging
to the domain of attraction of a Gaussian distribution. Indeed, as will
be seen in the Section~\ref{SBrownianCase}, the proof of this result
relies on the convergence of the coefficients $n/a_n^2$: if $\nu$ has
finite variance, we may and will take $a_n = \sigma\sqrt{n}$, but in
the general case, this convergence is not granted.

For both of these theorems, it is known that the first component
converges in the stronger sense of the Gromov--Hausdorff--Prokhorov
topology. However, as in the case studied by Bertoin and Miermont, the
question of whether the joint convergences hold in this sense remains open.

In the following sections, we will first work on the proof of Theorem
\ref{TMainThm}: preliminary results will be given in Section~\ref{SPrelims}, and the proof will be completed in Section~\ref{SProof}.
The global structure of this proof is close to that of \cite{BerMi},
although the technical arguments differ, especially in Section~\ref{SPrelims}. Section~\ref{SBrownianCase} will be devoted to the study
of the finite variance case.

%s2 #&#
\section{Preliminary results} \label{SPrelims}

%s2.1 #&#
\subsection{Modified distance on $\operatorname{Cut}_{\mathrm{v}}(\mathcal{T}_n)$}\label{SModDist}

We begin by introducing a new distance $\delta_n '$ on $\operatorname{Cut}_{\mathrm{v}}(\mathcal{T}
_n)$, defined in a similar way as the distance $\delta$ for a
continuous tree. We show that this distance is ``close'' enough to
$(a_n/n) \cdot\delta_n$, which will enable us to work on the modified
cut-tree $\operatorname{Cut}_{\mathrm{v}}'(\mathcal{T}_n):= (\operatorname{Cut}_{\mathrm{v}}(\mathcal{T}_n),\delta_n')$.

Recall the fragmentation of $\mathcal{T}_n$ introduced in Section~\ref
{SFragTn}. We now turn this process into a continuous-time
fragmentation, by saying that each vertex $v \in V(T)$ is marked
independently, with rate $\deg v / a_n$. Equivalently, this can be seen
as marking each edge of $T$ independently with rate $1/a_n$, and
deleting all the edges $e$ such that $e^- = e_i^-$ as soon as $e_i$ is
marked. Thus, we obtain a forest $\overline{\mathcal{T}}_n (t)$ at time
$t$. For
every $i \in\{1, \ldots, n\}$, we let $\mathcal{T}_{n,i} (t)$ denote the
component of $\overline{\mathcal{T}}_n (t)$ containing the edge
$e_i$, with the
convention $\mathcal{T}_{n,i}(t) = \varnothing$ if $e_i \notin
\overline{\mathcal{T}}_n(t)$,
and $\mu_{n,i} (t) = \mu_n (\mathcal{T}_{n,i} (t))$. Note that $n
\mu_{n,i}
(t)$ is the number of edges in $\mathcal{T}_{n,i} (t)$. For all $i,j
\in\{1,\ldots, n\}$, we now define
\begin{eqnarray*}
\delta_n ' (0,0) & =& 0, \qquad\delta_n
' (0, i) = \delta_n ' (i, 0) = \int
_0^{\infty} \mu_{n,i} (t) \,dt,
\\
\delta_n '  (i,j) &=& \int_{t_n (i,j)}^{\infty}
\bigl( \mu_{n,i} (t) + \mu_{n,j} (t) \bigr) \,dt,
\end{eqnarray*}
where $t_n (i,j)$ denotes the first time when the components $\mathcal{T}_{n,i}
(t)$ and $\mathcal{T}_{n,j} (t)$ become disjoint.

%le2.1 #&#
%
\begin{lem} \label{TModDist}
For all $i,j \in\{1,\ldots,n\}$, we have
\[
\mathbb{E} \biggl[\biggl\llvert\frac{a_n}{n} \delta_n (0,i) -
\delta_n ' (0,i)\biggr\rrvert^2 \biggr] =
\frac{a_n}{n} \mathbb{E} \bigl[\delta_n ' (0,i)
\bigr]
\]
and
\[
\mathbb{E} \biggl[\biggl\llvert\frac{a_n}{n} \delta_n (i,j) -
\delta_n ' (i,j)\biggr\rrvert^2 \biggr]
\leq\frac{a_n}{n} \mathbb{E} \bigl[\delta_n ' (0,i)
+ \delta_n ' (0,j) \bigr].
\]
\end{lem}

\begin{pf}
We work conditionally on $\mathcal{T}_n$. Fix $i \in\{1,\ldots,n\}$.
For all
$t \in\mathbb{R}_+$, we let $N_i (t)$ be the number of cuts happening
in the
component containing $e_i$ up to time $t$. Since each edge of $\mathcal{T}_n$
is marked independently with rate $1/a_n$, the process $(M_i (t))_{t
\geq0}$, where
\begin{eqnarray*}
M_i (t) &:=& \frac{a_n}{n} N_i (t) - \int
_0^t \mu_i (s) \,ds,
\end{eqnarray*}
is a purely discontinuous martingale. % ie tq $\[M_i\]_t = \sum(\Delta
%M_i)^2 (t)$
Its predictable quadratic variation can be written as
\begin{eqnarray*}
\langle M_i \rangle_t &=& \frac{a_n}{n} \int
_0^t \mu_i (s) \,ds.
\end{eqnarray*}
As a consequence, we have $\mathbb{E}[\llvert M_i (\infty)\rrvert^2]
= \mathbb{E} [ \langle M_i \rangle
_{\infty} ]$. % ref ? thm 4.7 poly pour des
%mart continues...
Since
\begin{eqnarray*}
\lim_{t \rightarrow\infty} N_i (t) = \delta_n (0,i)
\quad\mbox{and} \quad\lim_{t \rightarrow\infty} \int_0^t
\mu_i (s) \,ds &=& \delta_n ' (0,i),
\end{eqnarray*}
we get
\begin{eqnarray*}
\mathbb{E} \biggl[\biggl\llvert\frac{a_n}{n} \delta_n (0,i) -
\delta_n ' (0,i)\biggr\rrvert^2 \biggr] &=&
\frac{a_n}{n} \mathbb{E} \bigl[\delta_n ' (0,i)
\bigr].
\end{eqnarray*}

For the second part, we use similar arguments. We fix $i \neq j \in\{
1,\ldots,n\}$, and we write $t_{ij}$ instead of $t_n (i,j)$. For all
$t \geq0$, let $\mathcal{F}_t$ denote the $\sigma$-algebra generated
by $\mathcal{T}_n$ and the atoms $\{(t_r, e_{i_r})\dvtx  t_r \leq t
\}$
of the Poisson point process of marks on the edges introduced in
Section~\ref{SFragTn}. Conditionally on $\mathcal{F}_{t_{ij}}$,
\begin{eqnarray*}
&& M_{ij} (t):= M_i (t_{ij} + t) -
M_i (t_{ij}) + M_j (t_{ij} + t) -
M_j (t_{ij})
\end{eqnarray*}
defines a purely discontinuous martingale such that
\begin{eqnarray*}
\lim_{t \rightarrow\infty} M_{ij} (t) & =& \frac{a_n}{n} \bigl(
\delta_n (b_{ij}, i ) + \delta_n
(b_{ij}, j ) \bigr) - \int_{t_{ij}}^{\infty}
\mu_i (s) \,ds - \int_{t_{ij}}^{\infty}
\mu_j (s) \,ds
\\
& =& \frac{a_n}{n} \delta_n (i,j) - \delta_n
' (i,j),
\end{eqnarray*}
where $b_{ij}$ denotes the most recent common ancestor of the leaves
$i$ and $j$ in $\operatorname{Cut}_{\mathrm{v}}(\mathcal{T}_n)$. %or last block containing $i$
%and $j$,
%cf construction of $\cutv(\T_n)$
Besides, since the edges of $\mathcal{T}_{n,i}$ and $\mathcal
{T}_{n,j}$ are marked
independently after time $t_{ij}$, the predictable quadratic variation
of $M_{ij}$ is
\begin{eqnarray*}
\langle M_{ij} \rangle_t &=& \frac{a_n}{n} \mathbb{E}
\biggl[\int_{t_{ij}}^{t_{ij}+t} \bigl(\mu_i (s)
+ \mu_j (s) \bigr) \,ds \biggr].
\end{eqnarray*}
Since $\delta_n ' (i,j) = \delta_n ' (0,i) + \delta_n ' (0,j) - 2
\delta_n ' (0,b_{ij})$, this yields
\begin{eqnarray*}
\mathbb{E} \biggl[\biggl\llvert\frac{a_n}{n} \delta_n (i,j) -
\delta_n ' (i,j)\biggr\rrvert^2 \biggr]
&\leq&\frac{a_n}{n} \mathbb{E} \bigl[\delta_n ' (0,i)
+ \delta_n ' (0,j) \bigr].
\end{eqnarray*}\upqed
\end{pf}

%s2.2 #&#
\subsection{A first joint convergence} \label{SFstJointCv}

In this section, we first state precisely the convergence theorems we
will rely on to prove the following lemmas.\vadjust{\goodbreak} To this end, we work in the
setting of sums of i.i.d. random variable $S_n = Z_1 + \cdots
+ Z_n$,
where the laws of the $Z_i$ are in the domain of attraction of a stable
law. Under additional hypotheses, Theorem~\ref{TLimLocale} below gives
a choice of scaling constants $a_n$ for which $S_n / a_n$ converges in
law to a stable variable, and a formulation of Gnedenko's local limit
theorem in this setting. Next, we will recall a result of Duquesne
which shows, in particular, the convergence (\ref{ECvTn}). The version
we will use is a joint convergence of three functions encoding the
trees $\mathcal{T}_n$ and $\mathcal{T}$. These results will allow us
to prove a first
joint convergence for the fragmented trees in Proposition~\ref{TFstJointCv}.

%s2.2.1 #&#
\subsubsection{Local limit theorem} \label{SLimLocale}

We say that a measure $\pi$ on $\mathbb{Z}$ is lattice if there exists
integers $b \in\mathbb{Z}$, $d \geq2$ such that $\operatorname
{supp}(\pi) \subset b + d
\mathbb{Z}$. We know from our hypotheses that $\nu$ is critical, aperiodic,
and $\nu(\{0\}) >0$, and these three conditions imply that $\nu$ is
nonlattice.

For\vspace*{1pt} any $\beta\in(1,2)$, we let $X^{(\beta)}$ be a stable spectrally
positive L\'{e}vy process with parameter $\beta$, and $p_t^{(\beta)}
(x)$ the density of the law of $X^{(\beta)}_t$. Similarly, for $\beta
\in(0,1)$, we let $X^{(\beta)}$ be a stable subordinator with
parameter $\beta$, and $q_t^{(\beta)} (x)$ be the density of the law
of $X^{(\beta)}_t$. We fix the normalization of these processes by
setting, for all $\lambda\geq0$,
\begin{eqnarray*}
\mathbb{E} \bigl[e^{-\lambda X^{(\beta)}_t} \bigr] &=& e^{t \lambda
^{\beta}} \qquad\mbox{if }
\beta\in(1,2),
\\
\mathbb{E} \bigl[e^{-\lambda X^{(\beta)}_t} \bigr] &=& e^{-t \lambda
^{\beta}} \qquad\mbox{if }
\beta\in(0,1).
\end{eqnarray*}
We also introduce the set $R_{\rho}$ of regularly varying functions
with index $\rho$.

%th2.2 #&#
%
\begin{teo} \label{TLimLocale}
Let $(Z_i, i \in\mathbb{N})$ be an i.i.d. sequence of
random variables in
$\mathbb{N}\cup\{-1,0\}$. We denote by $Z$ a random variable having
the same
law as the $Z_i$. Suppose that the law of $Z$ belongs to the domain of
attraction of a stable law of index $\beta\in(0,2) \setminus\{1\}$,
and is nonlattice. If $\beta\in(1,2)$, we also suppose that $Z$ is centered.
We introduce
\begin{eqnarray*}
S_n &=& \sum_{i=1}^n
Z_i, \qquad n \geq0.
\end{eqnarray*}
Then there exists an increasing function $A \in R_{\beta}$ and a
constant $c$ such that:
\begin{longlist}[(ii)]
\item[(i)] It holds that
%
%e3 #&#
%
\begin{equation}
\label{ELimLocale1} \mathbb{P} (Z > r ) \sim\frac{c}{A (r)}\qquad\mbox{as } r
\rightarrow\infty.
\end{equation}
\item[(ii)] Letting $a$ be the inverse function of $A$, and $a_n =
a(n)$ for all $n \in\mathbb{N}$, we have
%
%e4 #&#
%
\begin{eqnarray}
\label{ELimLocale2} \lim_{n \rightarrow\infty} \sup_{k \in\mathbb{N}}
\biggl\llvert a_n \mathbb{P} (S_n = k ) -
p_1^{(\beta)} \biggl(\frac
{k}{a_n} \biggr)\biggr\rrvert&=& 0.
\end{eqnarray}
\end{longlist}
\end{teo}

\begin{pf}
Theorem 8.3.1 of \cite{BGT} shows that, since $Z \geq-1$ a.s., the
law of $Z$ belongs to the domain of attraction of a stable law of index
$\beta$ if and only if $\mathbb{P} (Z>r ) \in R_{-\beta}$.
Using Theorem 1.5.3 of \cite{BGT}, we can take a monotone equivalent
of $\mathbb{P} (Z>r )$, hence the existence of $A$ such
that (\ref{ELimLocale1}) holds with a constant $c$ which will be
chosen hereafter.

The remarks following Theorem 8.3.1 in \cite{BGT} give a
characterization of the $a_n$ such that $S_n / a_n$ converges in law to
a stable variable of index $\beta$. In particular, it is enough to
take $a_n$ such that $n / A(a_n)$ converges, so $a = A^{-1}$ is a
suitable choice. We now choose the constant $c$ such that $S_n / a_n$
converges to $X_1^{(\beta)}$. The second point of the theorem is given
by Gnedenko's local limit theorem (see, e.g., Theorem 4.2.1 of \cite{IL}).
\end{pf}

%s2.2.2 #&#
\subsubsection{Coding the trees $\mathcal{T}_n$ and $\mathcal{T}$} \label{SCodingTrees}

We now recall three classical ways of coding a tree $T \in\mathbb
{T}$, namely the associated contour function, height function and
Lukasiewicz path. Detailed descriptions and properties of these objects
can be found, for example, in \cite{Duq}.

To define the contour function $C^{[n]}$ of $\mathcal{T}_n$, we see
$\mathcal{T}_n$ as
the embedded tree in the oriented half-plane, with each edge having
length $1$. We consider a particle that visits continuously all edges
at unit speed, from the left to the right, starting from the root.
Then, for every $t \in[0,2n]$, we let $C^{[n]}_t$ be the \emph
{height} of the particle at time~$t$, that is, its distance to the
root. The height function is defined by\vspace*{1pt} letting $H^{[n]}_j$ be the
height of the vertex $v_j$. Finally, for all $i \in\{ 0, \ldots, n \}
$, we let $Z^{[n]}_{i+1}$ be the number of offspring of the vertex
$v_i$. Then the Lukasiewicz path of $\mathcal{T}_n$ is defined~by
\begin{eqnarray*}
W^{[n]}_j &=& \sum_{i=1}^j
Z^{[n]}_i -j, \qquad j = 0, \ldots, n+1.
\end{eqnarray*}
With this definition, we have $\deg(v_j, \mathcal{T}_n) =
W^{[n]}_{j+1} -
W^{[n]}_j +1$. We extend $C^{[n]}$ and~$H^{[n]}$ by setting $C^{[n]}_t
= 0$ for all $t \in[2n, 2n+2]$ and $H^{[n]}_{n+1}=0$ (this will allow
us to keep similar scaling factors for the rescaled functions we
introduce in Theorem~\ref{TCvC,H,X}). Figure~\ref{FCodingFunctions}
gives the contour function, height function and Lukasiewicz path
associated to the tree we used in Figure~\ref{FCutT}. % pas tr{
%\chr"E8}s rigoureux: on a un saut en $1$ pour X^{(n)}... il faudrait {
%\chr"E9}tendre les fonctions {\chr"E0} $t \geq2n$ (resp. $j \geq
%n+1$, $n+2$), et {\chr"E9}crire la convergence pour $t \geq0$... mais
%comme le saut en $1$ dispara{\chr"EE}t forc{\chr"E9}ment {\chr"E0} la
%limite, on peut peut-{\chr"EA}tre se contenter de {\chr"E7}a.

%f2 #&#
%
\begin{figure}%[t]

\includegraphics{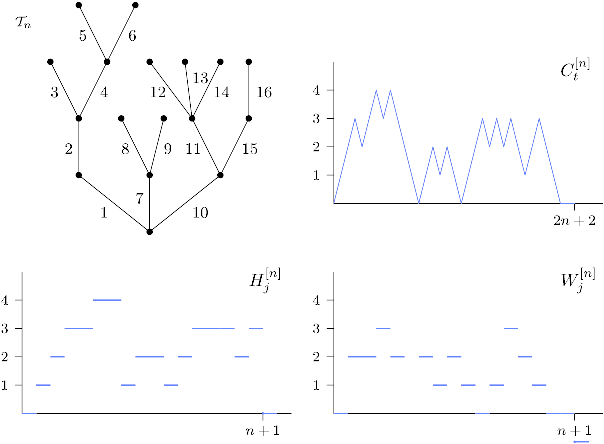}

\caption{The contour function $(C^{[n]}_t, 0 \leq t \leq2n+2)$,
height function $(H^{[n]}_j, j=0,\ldots,n+1)$ and Lukasiewicz path
$(W^{[n]}_j, j=0,\ldots,n+1)$ coding a realization of $\mathcal{T}_n$.}\label{FCodingFunctions}
\end{figure}

We also use a random walk $(W_j)_{j \geq0}$ with jump distribution
$\nu(k+1)$:
\begin{eqnarray*}
W_j &=& \sum_{i=1}^j
Z_i -j, \qquad j \geq0,
\end{eqnarray*}
where $(Z_i)_{i \in\mathbb{N}}$ are i.i.d. variables having
law $\nu$. Note
that $(W^{[n]}_j, j = 0,\ldots,n+1)$ has the same law as $(W_j, j=
0,\ldots,n+1)$ conditionally on $W_{n+1} = -1$ and \mbox{$W_{j} \geq0$} for
all $j \leq n$. In other terms, $(W_n)_{n \geq0}$ has the same law as
the Lukasiewicz path associated with a sequence of Galton--Watson trees
with offspring distribution $\nu$. From now on, we let $A$ and $a$ be
functions given by Theorem~\ref{TLimLocale} for the sequence of i.i.d. variables $(Z_i -1)_{i \in\mathbb{N}}$. Thus, we have the convergence
%
%e5 #&#
%
\begin{equation}
\label{ECvW} \frac{1}{a_n} W_n \mathop{ \longrightarrow}_{n \rightarrow\infty}^{(d)}
X^{(\alpha)}_1.
\end{equation}
Finally, let $(X_t)_{0 \leq t \leq1}$ be the excursion of length $1$
of the L\'{e}vy process $X^{(\alpha)}$, and~$(H_t)_{0 \leq t \leq1}$
be the excursion of length $1$ of the process $H^{(\alpha)}$ defined
in Section~\ref{SFragT}. We will use the following adaptation of the
results shown by Duquesne in~\cite{Duq}:

%th2.3 #&#
%
\begin{teo}[(Duquesne)] \label{TCvC,H,X}
Consider the rescaled functions $C^{(n)}$, $H^{(n)}$ and~$X^{(n)}$,
defined by
\begin{eqnarray*}
C^{(n)}_t &=& \frac{a_n}{n} C^{[n]}_{(2n+2)t},
\qquad H^{(n)}_t = \frac{a_n}{n} H^{[n]}_{\lfloor(n+1)t \rfloor},
\qquad X^{(n)}_t = \frac{1}{a_n} W^{[n]}_{\lfloor(n+1)t \rfloor}
\end{eqnarray*}
for all $t \in[0,1]$. If $\nu$ is aperiodic and hypothesis (\ref
{ECvW}) holds, then we have the joint convergence
\[
\bigl(C^{(n)}_t, H^{(n)}_t,
X^{(n)}_t \bigr)_{0 \leq t \leq1} \mathop{ \longrightarrow}_{n \rightarrow\infty}^{(d)}\, (H_t, H_t,
X_t )_{0 \leq t
\leq1}.
\]
\end{teo}

Proposition 4.3 of \cite{Duq} shows the convergence of the
corresponding bridges (with a change of index which comes from the fact
that we are working on trees conditioned to have $n$ edges instead of
$n$ vertices). Using the continuity of the Vervaat transform as in the
proof of \cite{Duq}, Theorem 3.1, then gives the result.

The fact that these convergences hold jointly will be used in the proof
of Lemma~\ref{TCvXtilde} below. Apart from this, we will mainly use
the convergence of the rescaled Lukasiewicz paths $X^{(n)}$, because of
the following link between the rates of our fragmentation and the jumps
of $X^{(n)}$. Recall from Section~\ref{SFragT} that $p\dvtx [0,1]
\rightarrow\mathcal{T}$ denotes the canonical projection from $[0,1]$
onto $\mathcal{T}
$. Now, the set of the branching points of $\mathcal{T}$ is $\{p(t)\dvtx  t
\in
[0,1] \mbox{ s.t. } \Delta X_t >0 \}$, and the associated local times
are $L (p(t)) = \Delta X_t$ (see \cite{DuqLG05}, proof of Theorem 4.7,
and \cite{Mi05}, Proposition 2). Similarly, we introduce the
projection $p_n$ from $K_n:= \{1/(n+1), \ldots, 1\}$ onto $V(\mathcal{T}_n)$,
such that $p_n (j/(n+1))$ is the vertex $v_{j-1}$ of $\mathcal{T}_n$.
Thus, for
all $t \in K_n$, we have
%
%e6 #&#
%
\begin{equation}
\label{ElinkX,deg} \Delta X^{(n)}_t = \frac{1}{a_n} \bigl(
\deg\bigl( p_n (t), \mathcal{T}_n\bigr) - 1\bigr).
\end{equation}

We conclude this part by showing another result of joint convergence,
for the Lukasiewicz paths of two symmetric sequences of trees. For all
$n \in\mathbb{N}$, we introduce the symmetrized tree $\widetilde
{\mathcal{T}}_n$,
obtained\vspace*{1pt} by reversing the order of the children of~each vertex of
$\mathcal{T}
_n$. We let $\widetilde{W}{}^{[n]}$ denote the Lukasiewicz path of
$\widetilde{\mathcal{T}}_n$. (We would obtain the same process by
visiting the
vertices of
$\mathcal{T}_n$ ``from right to left'' in the depth-first search.)
Finally, we
define the rescaled process $\widetilde{X}{}^{(n)}$ by
\begin{eqnarray*}
\widetilde{X}{}^{(n)}_t &=& \frac{1}{a_n}
\widetilde{W}{}^{[n]}_{\lfloor(n+1)t
\rfloor} \qquad\forall t \in[0,1].
\end{eqnarray*}

%le2.4 #&#
%
\begin{lem} \label{TCvXtilde}
There exists a process $(\widetilde{X}_t)_{0 \leq t \leq1}$ such that
there is the joint convergence
%
%e7 #&#
%
\begin{equation}
\label{ECvX,Xtilde} \bigl(X^{(n)}, \widetilde{X}{}^{(n)}\bigr)
\mathop{ \longrightarrow}_{n \rightarrow
\infty}^{(d)}\, (X, \widetilde{X} ).
\end{equation}
Moreover:
\begin{itemize}
\item The processes $\widetilde{X}$ and $X$ have the same law.
\item For every jump-time $t$ of $X$,
\begin{eqnarray*}
\Delta\widetilde{X}_{1-t-l(t)} &=& \Delta X_t \qquad\mbox{a.s.},
\end{eqnarray*}
where $l(t) = \inf\{ s>t\dvtx  X_s = X_{t^-}\} - t$.
\end{itemize}
\end{lem}

\begin{pf} Since $\mathcal{T}_n$ and $\widetilde{\mathcal{T}}_n$ have
the same law,
$\widetilde{X}{}^{(n)}$ converges in distribution to an excursion of the
L\'
{e}vy process $X^{(\alpha)}$ in the Skorokhod space $\mathbb{D}$.
Thus the sequence of the laws of the processes $(X^{(n)}, \widetilde
{X}{}^{(n)})$ is tight in $\mathbb{D} \times\mathbb{D}$. Up to
extraction, we can assume that $(X^{(n)}, \widetilde{X}{}^{(n)})$ converges
in distribution to a couple of processes $(X,\widetilde{X})$.

For all $n \in\mathbb{N}$, $j \in\{0,\ldots,n\}$, a simple\vspace*{1pt} computation
shows that the vertex $v_j (\mathcal{T}_n)$ corresponds to $v_{\tilde
{j}}(\widetilde{\mathcal{T}}_n)$, where
\begin{eqnarray*}
\tilde{j} &=& n - j + H^{[n]}_j - D^{[n]}_j,
\end{eqnarray*}
and $D^{[n]}_j$ is the number of strict descendants of $v_j (\mathcal{T}_n)$.
Note that $D^{[n]}_j$ is the largest integer such that $W^{[n]}_{i}
\geq W^{[n]}_j$ for all $i \in[j, j+D^{[n]}_j]$. Then (\ref
{ElinkX,deg}) shows that we have
%
%e8 #&#
%
\begin{equation}
\label{EJumpsX,Xtilde} \Delta\widetilde{X}{}^{(n)}_{(n - j + H^{[n]}_j -
D^{[n]}_j+1)/(n+1)} = \Delta
X^{(n)}_{(j+1)/(n+1)}.
\end{equation}

For all $n \in\mathbb{N}\cup\{\infty\}$, we let $(s^{(n)}_i)_{i \in
\mathbb{N}}$
be the sequence of the times where $X^{(n)}$ has a positive jump,
ranked in such a way that the sequence of the jumps $(\Delta
X^{(n)}_{s^{(n)}_i})_{i \in\mathbb{N}}$ is nonincreasing. We define the
$(\tilde{s}^{(n)}_i)_{i \in\mathbb{N}}$ in a similar way for the
$\widetilde
{X}{}^{(n)}$, $n \in\mathbb{N}\cup\{\infty\}$. Fix $i \in\mathbb
{N}$. Then (\ref{EJumpsX,Xtilde}) can be translated into
%
%e9 #&#
%
\begin{equation}
\label{ELocJumpsX,Xtilde} \tilde{s}^{(n)}_i = 1-s^{(n)}_i+
\frac{1}{n+1} \bigl(1 + H^{[n]}_{(n+1) s^{(n)}_i - 1}-D^{[n]}_{(n+1)
s^{(n)}_i - 1}
\bigr).
\end{equation}
Using the Skorokhod representation theorem, we now work under the hypothesis
\[
\bigl(H^{(n)}_t, X^{(n)}_t
\bigr)_{0 \leq t \leq1} \mathop{ \longrightarrow}_{n \rightarrow\infty}\, (H_t,
X_t )_{0
\leq t \leq1} \qquad\mbox{a.s.}
\]
Then the following convergences hold $\mbox{a.s.}$, for all $i \geq1$:
\begin{eqnarray*}
s^{(n)}_i &\displaystyle\mathop{ \longrightarrow}_{n \rightarrow\infty}&
s_i,
\\
\Delta X^{(n)}_{s^{(n)}_i} &\displaystyle\mathop{ \longrightarrow}_{n \rightarrow\infty
}&
\Delta X_{s_i},
\\
\frac{1}{n+1} H^{[n]}_{(n+1) s^{(n)}_i - 1} &\displaystyle\mathop{ \longrightarrow
}_{n \rightarrow\infty}& 0,
\\
\frac{1}{n+1} D^{[n]}_{(n+1) s^{(n)}_i - 1} &\displaystyle\mathop{ \longrightarrow
}_{n \rightarrow\infty}& l(s_i).
\end{eqnarray*}
The first two convergences hold because the $\Delta X_{s_i}$ are
distinct, and the last one uses the fact that $\mbox{a.s.}$
\[
\inf_{0 \leq u \leq\varepsilon} X_{s_i+l(s_i)+u} < X_{(s_i)^-} \qquad
\forall\varepsilon> 0.
\]
As\vspace*{1.5pt} a consequence, $\tilde{s}^{(n)}_i$ converges $\mbox{a.s.}$ to
$1-s_i-l(s_i)$. Thus, $\tilde{s}_i = 1-s_i-l(s_i)$ a.s., and
$\Delta
\widetilde{X}_{\tilde{s}_i} = \Delta X_{s_i}$ a.s. (Since the
discontinuity points are countable, this holds jointly for all $i$.)
% justification si n{\chr"E9}cessaire: utilise le fait que
%\begin{eqnarray*}
%\left\{
%\begin{array}{l}
%(X_n,Y_n) \cvg{}{(d)} (X,Y) \\
%X_n \cvg{}{\as} X \\
%Y_n \cvg{}{\as} f(X)
%\end{array}
%\right.
%\Rightarrow Y=f(X) \as
%\end{eqnarray*}
%(dem facile, voir les notes (1))

The L\'{e}vy--It\^{o} representation theorem shows that $\widetilde{X}$
can be written as a measurable function of $(\tilde{s}_i,\Delta
\widetilde{X}_{\tilde{s}_i})_{i \in\mathbb{N}}$. This identifies
uniquely the
law of
$(X,\widetilde{X})$, hence~(\ref{ECvX,Xtilde}).
\end{pf}

%s2.2.3 #&#
\subsubsection{Joint convergence of the subtree sizes}

Recall from Section~\ref{SFragT} that $(\xi(i), i \in\mathbb{N})$
is a
sequence of i.i.d. variables in $\mathcal{T}$, with
distribution the
mass-measure $\mu$, and $\xi(0) = 0$. For all $n \in\mathbb{N}$, we
introduce independent sequences $(\xi_n (i), i \in\mathbb{N})$ of
i.i.d.
uniform integers in $\{1,\ldots, n\}$, and set $\xi_n (0) = 0$.
Recalling the notation of Section~\ref{SModDist}, we let $\tau_n
(i,j) = t_n (\xi_n (i), \xi_n (j))$ be the first time when the
components $\mathcal{T}_{n,\xi_n (i)} (t)$ and $\mathcal{T}_{n,\xi
_n (j)} (t)$ become
disjoint. Similarly, $\tau(i,j)$ will denote the first time when the
components containing $\xi(i)$ and $\xi(j)$ become disjoint in the
fragmentation of $\mathcal{T}$. Our goal is to prove the following result.

%pr2.5 #&#
%
\begin{prop} \label{TFstJointCv}
As $n \rightarrow\infty$, we have the following weak convergences
\begin{eqnarray*}
\frac{a_n}{n} \mathcal{T}_n  &\displaystyle\mathop{ \longrightarrow} ^{(d)} &\mathcal{T},
\\
\bigl(\tau_n (i,j) \bigr)_{i,j \in\mathbb{N}}  &\displaystyle\mathop{ \longrightarrow} ^{(d)}& \bigl(\tau(i,j) \bigr)_{i,j \in\mathbb{N}},
\\
\bigl(\mu_{n,\xi_n (i)} (t) \bigr)_{i \in\mathbb{N}, t \geq0}  &\displaystyle\mathop{ \longrightarrow} ^{(d)}& \bigl(\mu_{\xi(i)} (t)
\bigr)_{i \in\mathbb{N}, t \geq0},
\end{eqnarray*}
where the three hold jointly.
\end{prop}

For the proof of this proposition, it will be convenient to identify
the $\xi_n (i)$ with vertices of $\mathcal{T}_n$ instead of edges. As
noted in
\cite{BerMi}, proof of Lemma 2, this makes no difference for the
result we seek. % $\eta_n (k,i,t)$ peut valoir un de plus que le nombre
%d'ar{\chr"EA}tes dans la composante de $\xi_n (i)$, parmi les $\xi_n
%(j)$, $j \in\{1,\ldots,n\}$.

We let
\[
t^{(n)}_i = \frac{\xi_n (i)+1}{n+1},
\]
so that $p_n (t^{(n)}_i) = v_{\xi_n (i)} (\mathcal{T}_n)$.
Furthermore, we may
and will take $\xi(i) = p (t_i)$, with a sequence $(t_i, i \in\mathbb{N})$
of independent uniform variables in $[0,1]$. The sequence $(t^{(n)}_i,
i \in\mathbb{N})$ converges in distribution to $(t_i, i \in\mathbb
{N})$. Since
these sequences are independent of the trees $\mathcal{T}_n$ and
$\mathcal{T}$, the
Skorokhod representation theorem allows us to assume
%
%e10 #&#
%
\begin{equation}
\label{HasCvX,t}
\cases{ \displaystyle\bigl( X^{(n)}, \widetilde{X}{}^{(n)}
\bigr) \mathop{ \longrightarrow}_{n \rightarrow\infty}\, (X, \widetilde{X}
)\qquad\mbox{a.s.},
\vspace*{3pt}\cr
\displaystyle\bigl(t^{(n)}_i, i \in\mathbb{N} \bigr) \mathop{ \longrightarrow}_{n\rightarrow\infty}\, (t_i, i \in\mathbb{N} )\qquad
\mbox{a.s.}}
\end{equation}
We will sometimes write $X^{(\infty)}_t$ and $t^{(\infty)}_i$ for
$X_t$ and $t_i$, when it makes notation easier.

For any two vertices $u,v$ of a discrete tree $T$, we introduce the notation
\begin{eqnarray*}
[\![ u, v ]\!]_V &=& [\![ u, v ]\!] \cap V(T)
\quad\mbox{and}\quad ]\!] u, v [\![_V = [\![ u, v ]\!]_V \setminus\{u,v\},
\end{eqnarray*}
where $ [\![ u, v ]\!]$ is the segment
between $u$ and $v$ in $T$ (seen as
an $\mathbb{R}$-tree).

%de2.6 #&#
%
\begin{defn}
Fix $T \in\mathbb{T}$. The shape of $T$ is the discrete tree $S(T)$
such that
\begin{eqnarray*}
V\bigl(S(T)\bigr) &=& \bigl\{v \in V(T)\dvtx  \deg v \neq1 \bigr\},
\\
E\bigl(S(T)\bigr) &=& \bigl\{\{u,v\} \in V\bigl(S(T)\bigr)^2\dvtx  \forall
w \in\,]\!] u, v [\![_V, \deg w = 1\bigr\}.
\end{eqnarray*}
\end{defn}

Note that this definition can easily be extended to the case of an
$\mathbb{R}
$-tree $(T,d)$ having a finite number of leaves, by using the
``convention'' $V(T) = \{v \in T\dvtx  \deg v \neq1 \}$ in the previous definition.

For all $n,k \in\mathbb{N}$, we let $\mathcal{R}_n (k)$ denote the
shape of
the subtree of $\mathcal{T}_n$ spanned by the vertices $\xi_n (1),
\ldots,
\xi_n (k)$ and the root. Similarly, $\mathcal{R}_{\infty} (k)$ will
denote the shape of the subtree of $\mathcal{T}$ spanned by $\xi(1),
\ldots,
\xi(k)$ and the root. For all $n \in\mathbb{N}\cup\{\infty\}$, we
let $V_n
(k)$ be the set of the vertices of $\mathcal{R}_n (k)$, and we
identify the edges of $\mathcal{R}_n (k)$ with the corresponding
segments in $\mathcal{T}_n$. In particular, for any edge $e = \{u,v\}$ of
$\mathcal{R}_n (k)$, we write $w \in e$ if $w \in\,]\!] u, v [\![_V$. We
let $L_n (v)$ denote the rate at which a vertex $v$ is deleted in
$\mathcal{T}
_n$. Recall from Section~\ref{SModDist} that $L_n (v) = \deg(v,
\mathcal{T}_n)/a_n$.

%le2.7 #&#
%
\begin{lem} \label{TCvL}
Fix $k \in\mathbb{N}$. Under (\ref{HasCvX,t}), $\mathcal{R}_n (k)$
is $\mbox{a.s.}
$ constant for all $n$ large enough (say $n \geq N$). Identifying $V_n
(k)$ with $V_{\infty} (k)$ for all $n \geq N$, we have
\[
\bigl( L_n (v), v \in V_n (k) \bigr) \mathop{ \longrightarrow}_{n
\rightarrow\infty}\, \bigl( L (v), v \in V_{\infty} (k) \bigr)
\qquad\mbox{a.s.}
\]
\end{lem}

The above convergence can be written more rigorously by numbering the
vertices of $\mathcal{R}_n (k)$ and $\mathcal{R}_{\infty} (k)$, and
indexing on $i \in\{1, \ldots, \llvert V_{\infty} (k)\rrvert\}$, but we
keep this form to make the notation easier.

\begin{pf*}{Proof of Lemma~\ref{TCvL}}
For all $n \in\mathbb{N}\cup\{\infty\}$, $s<t \in[0,1]$, we let
\begin{eqnarray*}
I^{(n)}_{s,t} & =& \inf_{s < u < t}
X^{(n)}_u,
\end{eqnarray*}
and for all $i,j \in\mathbb{N}$,
\begin{eqnarray*}
t^{(n)}_{ij} & =& \sup\bigl\{ s \in%
\bigl[0,t^{(n)}_i \wedge t^{(n)}_j
\bigr]\dvtx  I^{(n)}_{s,t^{(n)}_i} = I^{(n)}_{s,t^{(n)}_j}\bigr
\}.
\end{eqnarray*}
Note that $p_n (t^{(n)}_{ij})$ is the most recent common ancestor of
the vertices $\xi_n (i)$ and $\xi_n (j)$ in $\mathcal{T}_n$. % also
%true for
%$n=\infty$, at least using the properties (i)-(iii), probably more
%simple...
If, for example, $t^{(n)}_i < t^{(n)}_j$, we can rewrite $t^{(n)}_{ij}$ as
\[
\sup\bigl\{ s \in%
\bigl[0,t^{(n)}_i \bigr]\dvtx
X^{(n)}_{s^-} \leq I^{(n)}_{t^{(n)}_i,
t^{(n)}_j}\bigr\}.
\]
Besides, for $n=\infty$, we can replace the inequality in the broad
sense by a strict inequality:
\begin{eqnarray*}
t_{ij} &=& \sup\bigl\{ s \in%
[0,t_i ]\dvtx
X_{s^-} < I_{t_i, t_j}\bigr\}.
\end{eqnarray*}
With this notation, it is elementary to show that the following
properties hold $\mbox{a.s.}$ for all $i,j,i',j' \geq0$:
\begin{longlist}[(iii)]
\item[(i)] $X$ is continuous at $t_i$, and $X^{(n)}_{t^{(n)}_i}$
converges to $X_{t_i}$ as $n \rightarrow\infty$.
\item[(ii)] $t^{(n)}_{ij}$ converges to $t_{ij}$ as $n \rightarrow
\infty$.
\item[(iii)] $X^{(n)}_{t^{(n)}_{ij}}$ converges to $X_{t_{ij}}$ and
$X^{(n)}_{(t^{(n)}_{ij})^-}$ converges to $X_{(t_{ij})^-}$ as $n
\rightarrow\infty$.
\item[(iv)] If $t_{ij} = t_{i'j'}$, then $t^{(n)}_{ij} =
t^{(n)}_{i'j'}$ for all $n$ large enough.
\end{longlist}
We now fix $k \in\mathbb{N}$. We introduce the set
\begin{eqnarray*}
B_n (k) &=& \bigl\{t^{(n)}_{i}\dvtx  i \in\{1, \ldots,
k\}\bigr\} \cup\bigl\{t^{(n)}_{ij}\dvtx  i,j \in\{1, \ldots, k\}
\bigr\} \cup\{0\}
\end{eqnarray*}
of the times coding the vertices of $\mathcal{R}_n (k)$. We let $N_n
(k)$ be the number of elements of $B_{n} (k)$, and $b^{(n,k)}_i$ be the
$i$th element of $B_n (k)$. % Note that there exists a random $n_k \in
%\N$ (only depending of the $X_n$ and $t^{(n)}_i$) such that for all $n
%\geq n_k$\dvtx
%\begin{itemize}
%\item The order between the $t^{(n)}_i, i=1, \ldots, k$, and the
%$t^{(n)}_{ij}, i,j=1, \ldots, k$ is the same.
%\item The order between the $X^{(n)}_{t^{(n)}_i}$, $i=1, \ldots, k$
%and the $X^{(n)}_{t^{(n)}_{ij}}$, $i,j=1, \ldots, k$ is also the same.
%\item$\card{B_n(k)} = N(k)$.
%\end{itemize}
%For such $n$
Properties (i)--(iv) can be translated into the $\mbox{a.s.}$ properties:
\begin{longlist}[(ii)$'$]
\item[(i)$'$] For $n$ large enough, $N_n (k)$ is constant.
\item[(ii)$'$] For all $i \in\{1, \ldots, N_{\infty} (k)\}$,
\begin{eqnarray*}
b^{(n,k)}_i  &\displaystyle\mathop{ \longrightarrow}_{n \rightarrow\infty}&
b^{(\infty,k)}_i,
\\
X^{(n)}_{b^{(n,k)}_i}  &\displaystyle\mathop{ \longrightarrow}_{n \rightarrow\infty
}& X_{b^{(\infty,k)}_i},
\\
X^{(n)}_{(b^{(n,k)}_i)^-}  &\displaystyle\mathop{ \longrightarrow}_{n \rightarrow
\infty}&
X_{(b^{(\infty,k)}_i)^-}.
\end{eqnarray*}
\end{longlist}
Moreover, $\mathcal{R}_n (k)$ and the $L_n (v)$, $v \in V_n (k)$, can
be recovered in a simple way using $B_n (k)$ and the $X^{(n)}_b$, $b
\in B_n (k)$:
\begin{itemize}
\item Construct a graph with vertices labeled by $B_n (k)$, the root
having label $0$.
\item For every $b \in B_n (k) \setminus\{0\}$, let $b'$ denote the
largest $b'' < b$ such that $b'' \in B_n (k)$ and $X^{(n)}_{b''} \leq
X^{(n)}_b$, then draw an edge between the vertices labelled $b$ and $b'$.
\item For each vertex $v$ labeled by $b \in B_n (k)$, let $L_n (v) =
\Delta X^{(n)}_{b} + 1/a_n$.
\end{itemize}
This entails the lemma.
\end{pf*}

This first lemma allows us to control the rate at which fragmentations
happen at the vertices of $\mathcal{R}_n (k)$. We now need another
quantity for the fragmentations happening ``on the branches'' of
$\mathcal{R}_n (k)$, that is, at vertices $v \in V (\mathcal{T}_n)
\setminus
V_n (k)$.
For every $n \in\mathbb{N}\cup\{\infty\}$, we let
\begin{eqnarray*}
\sigma_n (t) &=& \mathop{\sum_{0 < s < t}}_{X^{(n)}_{s-} <
I^{(n)}_{\st}}
\Delta X^{(n)}_s \qquad\forall t \in[0,1].
\end{eqnarray*}
If $n \in\mathbb{N}$, the quantity $a_n \sigma_n (t)$ is the sum of the
quantities $\deg v-1$ over all strict ancestors $v \neq\rho_n$ of $p_n
(t)$ in $\mathcal{T}_n$. Similarly, $\sigma(t)$ is the (infinite) sum
of the
$L(v)$ for all branching points $v$ of $\mathcal{T}$ that are on the
path $ [\![ p(t), \rho]\!]$.

%le2.8 #&#
%
\begin{lem} \label{TCvsigma}
With the preceding notation, in the setting of (\ref{HasCvX,t}), for
all $i \in{1, \ldots, N(k)}$, we have the convergence
\[
\sigma_n \bigl(b^{(n,k)}_i\bigr) \mathop{ \longrightarrow}_{n \rightarrow\infty
} \sigma_{\infty
} \bigl(b^{(\infty,k)}_i
\bigr)\qquad\mbox{a.s.}
\]
\end{lem}

\begin{pf}
We fix $i \in\mathbb{N}$, and let $b_n = b^{(n,k)}_i$ to simplify the
notation. For all $n \in\mathbb{N}\cup\{\infty\}$, we write $\sigma
_n (t)
= \sigma_n^- (t) + \sigma_n^+ (t)$, where
\begin{eqnarray*}
\sigma_n^+ (t) &=& \mathop{\sum_{0 < s < t}}_{X^{(n)}_{s-} <
I^{(n)}_{\st}}\bigl(X^{(n)}_{s} - I^{(n)}_{s,t} \bigr),
\\
\sigma_n^- (t) &=& \mathop{\sum_{0 < s < t}}_{X^{(n)}_{s-} <
I^{(n)}_{\st}}
\bigl(I^{(n)}_{s,t} - X^{(n)}_{s^-} \bigr).
\end{eqnarray*}
For any $s,t$ such that $0 < s < t$ and $X^{(n)}_{s-} < I^{(n)}_{\st}$,
the term $a_n (X^{(n)}_{s} - I^{(n)}_{s,t} )$ corresponds to the number
of children of $p_n(s)$ that are visited before $p_n(t)$ in the
depth-first search, and $a_n (I^{(n)}_{s,t} - X^{(n)}_{s^-} )$ is the
number of children of $p_n(s)$ that are visited after $p_n(t)$. Writing
the same decomposition $\tilde{\sigma}_n (t) = \tilde{\sigma}_n^-
(t) + \tilde{\sigma}_n^+ (t)$ for the trees $\widetilde{\mathcal
{T}}_n$, and
recalling (\ref{ELocJumpsX,Xtilde}), we thus get
\begin{eqnarray*}
\sigma_n^+ (b_n) &=& \tilde{\sigma}_n^- (
\tilde{b}_n ),
\end{eqnarray*}
where
\begin{eqnarray*}
\tilde{b}_n &=& 1-b_n+\frac{1}{n+1} \bigl(1 +
H^{[n]}_{(n+1) b_n -
1}-D^{[n]}_{(n+1) b_n - 1} \bigr).
\end{eqnarray*}

Now we note that for all $t \geq0$, we have $\sigma_n^- (t) =
X^{(n)}_{t^-}$ and $\sigma_{\infty}^- (t) = X_{t^-}$. As a
consequence, using (\ref{HasCvX,t}), we get
\[
\sigma_n^- (b_n) \mathop{ \longrightarrow}_{n \rightarrow\infty}
X_{b^-} \qquad\mbox{a.s.}
\]
The same relation for $\tilde{\sigma}_n^-$ and $\widetilde{X}{}^{(n)}$,
and the fact that $\tilde{b}_n$ converges $\mbox{a.s.}$ to $\tilde
{b}:=
1-b-l(b)$, show that
\begin{eqnarray*}
\sigma_n^+ (b_n) &=& \tilde{\sigma}_n^- (
\tilde{b}_n) \mathop{ \longrightarrow}_{n \rightarrow\infty}
\widetilde{X}_{\tilde{b}^-} \qquad\mbox{a.s.}
\end{eqnarray*}
Thus, $\sigma_n (b_n)$ converges $\mbox{a.s.}$ to $\sigma_{\infty
}^- (b) +
\tilde{\sigma}_{\infty}^- (\tilde{b})$. To show that this quantity
is equal to $\sigma_{\infty} (b)$, we introduce the ``truncated''
sums $\sigma_{n,\varepsilon} (t)$, $\sigma_{n,\varepsilon}^+ (t)$,
$\sigma_{n,\varepsilon}^- (t)$, obtained by taking into account only the
$s \in(0,t)$ such that $X^{(n)}_{s-} < I^{(n)}_{\st}$ and $\Delta
X^{(n)}_s > \varepsilon$. For all $n \in\mathbb{N}\cup\{\infty\}$, these
quantities are finite sums. Therefore, the $\mbox{a.s.}$ convergence~(\ref{HasCvX,t}) implies that for all $\varepsilon> 0$,
\[
\sigma_{\infty,\varepsilon}^+ (b) = \lim_{n \rightarrow\infty}
\sigma_{n,\varepsilon}^+ (b_n) = \lim_{n \rightarrow\infty} \tilde{
\sigma}_{n,\varepsilon}^- (\tilde{b}_n) =\tilde{\sigma}_{\infty,\varepsilon}^-
(\tilde{b}).
\]
Thus, $\sigma_{\infty,\varepsilon} (b) = \sigma_{\infty,\varepsilon}^-
(b) + \tilde{\sigma}_{\infty,\varepsilon}^- (\tilde{b})$. By letting
$\varepsilon\rightarrow0$, we get $\sigma_{\infty} (b) = \sigma
_{\infty}^- (b) + \tilde{\sigma}_{\infty}^- (\tilde{b})$.
\end{pf}

We now come back to the proof of Proposition~\ref{TFstJointCv}.

\begin{pf*}{Proof of Proposition~\ref{TFstJointCv}}
For all $n \in\mathbb{N}\cup\{\infty\}$, we add edge-lengths to the
discrete tree $\mathcal{R}_n (k)$ by letting
\begin{eqnarray*}
\ell_n \bigl(\{u,v\}\bigr) &=& d_n (u,v)\qquad\mbox{if } n
\in\mathbb{N},
\\
\ell_{\infty} \bigl(\{u,v\}\bigr) &=& d (u,v),
\end{eqnarray*}
for every edge $\{u,v\}$. Let $\mathcal{R}'_n (t)$ denote the
resulting tree with edge-lengths. We now write $\mathcal{R}_n (k,t)$
for the tree $\mathcal{R}'_n (t)$ endowed with point processes of
marks on its edges and vertices, defined as follows:
\begin{itemize}
\item The marks on the vertices of $\mathcal{R}_n (k)$ appear at the
same time as the marks on the corresponding vertices of $\mathcal{T}_n$.
\item Each edge receives a mark at its midpoint at the first time when
a vertex $v$ of $\mathcal{T}_n$ such that $v \in e$ is marked in
$\mathcal{T}_n$.
\end{itemize}
For each $n$, these two point processes are independent, and their
rates are the following:
\begin{itemize}
\item Each vertex $v \in V_n (k)$ is marked at rate $L_n (v)$,
independently of the other vertices.
\item For each edge $e$ of $\mathcal{R}_n (k)$, letting $b, b'$ denote
the points of $B_n (k)$ corresponding to $e^-, e^+$ (as explained in
the proof of Lemma~\ref{TCvL}), the edge $e$ is marked at rate $\Sigma
L_n (e)$, independently of the other edges, with
\begin{eqnarray*}
\Sigma L_n (e) & =& \sum_{v \in V (\mathcal{T}_n) \cap e}
L_n (v)
\\
& =& \sigma_n \bigl(b'\bigr) - \sigma_n (b)
+ \frac{n}{a_n^2} \bigl( H^{(n)}_{(b')^-} -
H^{(n)}_{b^-} \bigr) - L_n \bigl(e^-\bigr)
\end{eqnarray*}
if $n \in\mathbb{N}$, and
\begin{eqnarray*}
\Sigma L_{\infty} (e) &=& \Sigma L (e) = \sum_{v \in V (\mathcal{T}) \cap e}
L (v) = \sigma_{\infty} \bigl(b'\bigr) -
\sigma_{\infty} (b) - L\bigl(e^-\bigr).
\end{eqnarray*}
\end{itemize}
Now Lemmas~\ref{TCvL} and~\ref{TCvsigma} show that $L_n (v)$ and
$\Sigma L_n (e)$ converge to $L (v)$ and $\Sigma L (e)$ (resp.)
as $n \rightarrow\infty$. % and~\ref{ECvTn} implies the convergence
%of the edge-lengths of $\mathcal{R}_n (k)$ recaled by a factor $a_n/n$.
Therefore, we have the convergence
%
%e11 #&#
%
\begin{equation}
\label{ECvRmarques} \biggl( \frac{a_n}{n} \mathcal{R}_n (k,t), t
\geq0 \biggr) \mathop{ \longrightarrow}_{n \rightarrow\infty}^{(d)}\, \bigl(
\mathcal
{R}_{\infty} (k,t), t \geq0 \bigr),
\end{equation}
where $(a_n/n) \cdot\mathcal{R}_n (k,t)$ and $\mathcal{R}_{\infty}
(k,t)$ can be seen as random variables in $\mathbb{T} \times(\mathbb{R}_+
\cup\{-1\})^{\mathbb{N}} \times\{-1,0,1\}^{\mathbb{N}^2}$, for example,
\[
(a_n/n) \cdot\mathcal{R}_n (k,t) = \bigl(
\mathcal{R}_n (k), (l_i)_{i \geq1}, \bigl(
\delta_V (i,t)\bigr)_{i \geq0}, \bigl(\delta_E
(i,t)\bigr)_{i
\geq1} \bigr),
\]
where
\begin{eqnarray*}
l_i &=& \cases{ (a_n/n) \cdot\ell\bigl(e_i
\bigl(\mathcal{R}_n (k)\bigr)\bigr), &\quad if $i <
N_n(k)$,
\vspace*{2pt}\cr
-1, &\quad if $i \geq N_n (k)$,}
\\
\delta_V (i,t) &=& \cases{ 1, &\quad if $i < N_n (k)$
and the vertex $v_i \bigl(\mathcal{R}_n (k)\bigr)$
\cr
&
\qquad\quad has been marked before time $t$,
\vspace*{2pt}\cr
0, &\quad if $i < N_n (k)$
and the vertex $v_i \bigl(\mathcal{R}_n (k)\bigr)$
\cr
&
\qquad\quad has not been marked before time $t$,
\vspace*{2pt}\cr
-1,&\quad if $i \geq N_n
(k)$,}
\\
\delta_E (i,t) &=& \cases{ 1, &\quad if $i < N_n (k)$
and the edge $e_i \bigl(\mathcal{R}_n(k)\bigr)$
\cr
&\qquad\quad has been marked before time $t$,
\vspace*{2pt}\cr
0, &\quad if $i < N_n (k)$ and the edge $e_i \bigl(\mathcal{R}_n(k)\bigr)$
\cr
&\qquad\quad has not been marked before time $t$,
\vspace*{2pt}\cr
-1,&\quad if $i \geq N_n
(k)$}
\end{eqnarray*}
[recall that $N_n (k)$ is the number of vertices of $\mathcal{R}_n
(k)$]. Note that we could keep working under (\ref{HasCvX,t}) to get
an $\mbox{a.s.}$ convergence, but this is no longer necessary.

The rest of the proof goes as in \cite{BerMi}. For every $i \in
\mathbb{N}$,
we let $\eta_n (k,i,t)$ denote the number of vertices among $\xi_n
(1), \ldots, \xi_n (k)$ in the component of $\mathcal{R}_n (k)$
containing $\xi_n (i)$ at time $t$. Similarly, denote by $\eta
_{\infty} (k,i,t)$ the number of vertices among $\xi(1), \ldots, \xi
(k)$ in the component of $\mathcal{R}_{\infty} (k)$ containing $\xi
(i)$ at time $t$. It follows from (\ref{ECvRmarques}) that we have the
joint convergences
\begin{eqnarray*}
\frac{a_n}{n} \mathcal{T}_n  &\displaystyle\mathop{ \longrightarrow} ^{(d)}&
\mathcal{T},
\\
\bigl(\eta_n (k,i,t)\bigr)_{t \geq0, i \in\mathbb{N}}  &\displaystyle\mathop{\longrightarrow
} ^{(d)}& \bigl(\eta_{\infty} (k,i,t)
\bigr)_{t \geq0, i \in\mathbb{N}},
\\
\bigl(\tau_n (i,j)\bigr)_{i,j \in\mathbb{N}}  &\displaystyle\mathop{ \longrightarrow} ^{(d)}& \bigl(\tau(i,j)\bigr)_{i,j \in\mathbb{N}}.
\end{eqnarray*}
Besides, the law of large numbers gives that for each $i \in\mathbb
{N}$ and
$t \geq0$,
\[
\frac{1}{k} \eta_{\infty} (k,i,t) \mathop{ \longrightarrow}_{n
\rightarrow\infty}
\mu_{\xi(i)} (t) \qquad\mbox{a.s.}
\]
Thus, for every fixed integer $l$ and times $0 \leq t_1 \leq\cdots
\leq t_l$, we can construct a sequence $k_n \rightarrow\infty$
sufficiently slowly, such that
\[
\biggl(\frac{1}{k_n}\eta_n (k_n,i,t_j)
\biggr)_{i,j \in\{1, \ldots,l\}}  \mathop{ \longrightarrow} ^{(d)}\,
\bigl(\mu_{\xi(i)} (t_j)\bigr)_{i,j \in
\{1, \ldots,l\}},
\]
or equivalently (see \cite{AldPit}, Lemma 11)
\begin{eqnarray*}
&&\bigl( \mu_{n,\xi_n (i)} (t_j) \bigr)_{i,j \in\{1, \ldots,l\}}  \mathop{ \longrightarrow} ^{(d)}\, \bigl(\mu_{\xi(i)}
(t_j)\bigr)_{i,j \in\{1,
\ldots,l\}},
\end{eqnarray*}
both holding jointly with the preceding convergences. This entails the
proposition.
\end{pf*}

%s2.3 #&#
\subsection{Upper bound for the expected component mass} \label{SKeyEstimates}

To get the convergence of $(\mathcal{T}_n,\operatorname{Cut}_{\mathrm{v}}(\mathcal{T}_n))$, we
will finally need
to control the quantities
\begin{eqnarray*}
&&\mathbb{E} \biggl[\int_{2^l}^{\infty}
\mu_{n,\xi_n} (t) \,dt \biggr],
\end{eqnarray*}
where $\xi_n$ is a uniform random integer in $\{1, \ldots, n\}$. Our
main goal is to show that these quantities converge to $0$ as $l$ tends
to $\infty$, uniformly in $n$, as stated in Corollary~\ref{TCor1}.

To this end, we will sometimes work under the size-biased measure
$\GW^{\ast}$, defined as follows. We recall that a pointed tree is a
pair $(T,v)$, where $T$ is a rooted planar tree and $v$ is a vertex of
$T$. The measure $\GW^{\ast}$ is the sigma-finite measure such that,
for every pointed tree $(T,v)$,
\begin{eqnarray*}
\GW^{\ast} (T,v) &=& \mathbb{P} (\mathbf{T}=T ),
\end{eqnarray*}
where $\mathbf{T}$ is a Galton--Watson tree with offspring
distribution $\nu$. We let $\mathbb{E}^{\ast}$ denote the
expectation under this ``law.'' In particular, the conditional law
$\GW^{\ast}$ given $\llvert V (T)\rrvert= n+1$ is
well-defined, and
corresponds to the distribution of a pair $(\mathcal{T}_n,v)$ where
given $\mathcal{T}
_n$, $v$ is a uniform random vertex of $\mathcal{T}_n$. Hereafter, $T$ will
denote a \mbox{$\nu$-}Galton--Watson tree, whose expectation will either be
taken under the unbiased law or under a conditioned version of the law
$\GW^{\ast}$. Recall that we only consider values of $n$ such that $P_n
= \mathbb{P} (\llvert V (T)\rrvert= n+1 ) \neq0$.

For all $m, n \in\mathbb{N}$ such that $m \leq n$ and $P_m \neq0$,
for all
$t \in\mathbb{R}_+$, we define
%
%e12 #&#
%
\begin{equation}
\label{EDefEn} E_{m,n} (t)= \frac{1}{m} \mathbb{E} \biggl[\sum
_{e \in E (\mathcal
{T}_m)} \exp\biggl(- \sum
_{u \in[\![ \rho_m, e^-
]\!]_V} \deg(u, \mathcal{T}_m) \frac{t}{a_n}
\biggr) \biggr],
\end{equation}
and $E_n (t) = E_{n,n} (t)$. Equivalently, we can write
\begin{eqnarray*}
E_{m,n} (t)&=& \frac{1}{m} \mathbb{E}^{\ast} \biggl[\sum
_{e \in E (T)} \exp\biggl(- \sum
_{u \in[\![ \rho(T), e^-
]\!]_V} \deg(u, T) \frac{t}{a_n} \biggr) \Big| \bigl\llvert
V(T)\bigr\rrvert=m+1 \biggr].
\end{eqnarray*}
For all $m<n$, we also use the notation
\begin{eqnarray*}
&&P^{\ast}_{m,n}:= \mathbb{P}^{\ast} \bigl(\bigl\llvert
V (T_v)\bigr\rrvert= m+1 | \bigl\llvert V (T)\bigr\rrvert= n+1
\bigr),
\end{eqnarray*}
where $T_v$ denotes the tree formed by $v$ and its descendants. Our
first step is to show the following.

%le2.9 #&#
%
\begin{lem} \label{TMajEspMu}
Let $\xi_n$ be a uniform random edge of $\mathcal{T}_n$. Using the previous
notation, we have
%
%e13 #&#
%
\begin{equation}
\label{EmajEspMu} \mathbb{E} \bigl[\mu_{n,\xi_n} (t) \bigr] \leq
\frac{1}{n} e^{-t/a_n} + 2 \biggl(E_n (t) + \mathop{\sum
_{m=1}}_{P_m \neq0}^{n-1}
P^{\ast}_{m,n} \frac
{m}{n} E_{m,n} (t)
\biggr).
\end{equation}
\end{lem}

The proof of this lemma will use Proposition~\ref{TLoisTv} below. Let
us first introduce some notation. For all $v \in V (T)$, we let $T^v$
be the subtree obtained by deleting all the strict descendants of $v$
in $T$, and as before, $T_v$ be the tree formed by $v$ and its
descendants. We define a new tree $\hat{T}^{\hat{v}}$, constructed by
taking $T^v$ and modifying it as follows:
\begin{itemize}
\item we remove the edge $e(v)$ between $v$ and $p(v)$;
\item we add a new child $\hat{v}$ to the root, and let $\hat
{e}_{\hat{v}}$ denote the edge between $\hat{v}$ and the root;
\item we reroot the tree at $p(v)$.
\end{itemize}
An example of this construction is given in Figure~\ref
{FTransformations}. Note that we have natural bijective correspondences
between\vspace*{2pt} $V (T)$, $( V (T^v) \setminus\{v\} ) \sqcup V (T_v)$ and $( V
(\hat{T}^{\hat{v}}) \setminus\{\hat{v}\} ) \sqcup V (T_v)$, and
between $E (T)$, $E (T^v) \sqcup E (T_v)$ and $E (\hat{T}^{\hat{v}})
\sqcup E (T_v)$. Furthermore, one can easily check that for all $u \in
V (\hat{T}^{\hat{v}}) \setminus\{\hat{v}\}$, we have $\deg(u,\hat
{T}^{\hat{v}}) = \deg(u,T)$, and for all $u \in V (T_v)$, $\deg(u,T_v)
= \deg(u,T)$.

%f3 #&#
%
\begin{figure}%[t]

\includegraphics{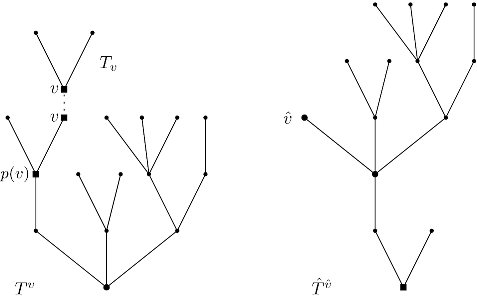}

\caption{The trees $T_v$, $T^v$ and $\hat{T}^{\hat{v}}$ obtained
from a pointed tree $(T,v)$.}\label{FTransformations}
\end{figure}

This transformation is the same as in \cite{BerMi}, page~21, except that
we work with rooted trees instead of planted trees. In our case, adding
the edge $\hat{e}_{\hat{v}}$ and deleting $e(v)$ mimics the existence
of a base edge. Thus, we can use Proposition 2 of \cite{BerMi}.

%pr2.10 #&#
%
\begin{prop} \label{TLoisTv}
Under $\GW^{\ast}$, $(\hat{T}^{\hat{v}}, T_v)$ and $(T^v,T_v)$ have
the same ``law,'' and the trees $T^v$ and $T_v$ are independent, with
$T_v$ being a Galton--Watson tree.
\end{prop}

\begin{pf*}{Proof of Lemma~\ref{TMajEspMu}}
In this proof, we identify $\xi_n$ with the edge $e_{\xi_n}$, to make
notation easier. We first note that for each edge $e \in E (\mathcal{T}_n)$,
$e$ belongs to the component $\mathcal{T}_{n,\xi_n} (t)$ if and only
if no
vertex on the path $ [\![ e^-, \xi_n^- ]\!]_V$ has been removed at time
$t$. Given $\mathcal{T}_n$ and $\xi_n$, this happens with probability
\begin{eqnarray*}
&&\exp\biggl(- \sum_{u \in[\![ e^-, \xi_n^- ]\!]_V} \deg u \cdot
\frac
{t}{a_n} \biggr)
\end{eqnarray*}
[for any vertex $u$, at time $t$, $u$ has been deleted from the initial
tree with probability $1-\exp(- \deg u \cdot t/a_n)$]. Thus,
\begin{eqnarray*}
\mathbb{E} [n \mu_{n,\xi_n} ] &=& \mathbb{E} \biggl[\sum
_{e \in E (\mathcal{T}_n)} \mathbh{1}_{e
\in\mathcal{T}_{n,\xi_n} (t)} \biggr]
\\
&=& \mathbb{E} \biggl[
\sum_{e \in E (\mathcal{T}_n)} \exp\biggl(- \sum
_{u \in[\![ e^-, \xi_n^- ]\!]_V} \deg u \cdot\frac{t}{a_n} \biggr) \biggr].
\end{eqnarray*}
Since the edge $\xi_n$ is chosen uniformly in $E (\mathcal{T}_n)$,
this yields
\begin{eqnarray*}
\mathbb{E} [n \mu_{n,\xi_n} ] & =& \frac{1}{n} \mathbb{E} \biggl[
\sum_{e,\xi\in E (\mathcal{T}_n)} \exp\biggl(- \sum
_{u \in[\![ e^-, \xi^- ]\!]_V} \deg u \frac{t}{a_n} \biggr) \biggr]
\\
& =& \frac{1}{n} \mathbb{E} \biggl[\sum_{v \in V (\mathcal{T}_n)}
\mathbh{1}_{v \neq\rho(\mathcal{T} _n)} \sum_{e \in E (\mathcal
{T}_n)} \exp\biggl(-
\sum_{u \in[\![ e^-, p (v)
]\!]_V} \deg u \frac{t}{a_n} \biggr) \biggr],
\end{eqnarray*}
where $p (v)$ denotes the parent of vertex $v$. Hence, calling $A_n(T)$
the event $\{\llvert V (T)\rrvert= n+1\}$,
\begin{eqnarray*}
&& \mathbb{E} [n \mu_{n,\xi_n} ] = \frac{n+1}{n} \mathbb
{E}^{\ast} \biggl[\mathbh{1}_{v \neq\rho(T)} \sum
_{e \in E
(T)} \exp\biggl(- \sum_{u \in[\![
e^-, p (v) ]\!]_V}
\deg u \frac
{t}{a_n} \biggr) \Big| A_n(T) \biggr].
\end{eqnarray*}
Distinguishing the cases for which $e \in E (T_v), e \in E (T^v)
\setminus\{e(v)\}$ and $e = e(v)$, we split this quantity into three terms:
%
%e14 #&#
%
\begin{equation}
\label{EDecEspMu} \mathbb{E} [n \mu_{n,\xi_n} ] = \biggl(1+\frac
{1}{n}
\biggr) \bigl(\Sigma_v + \Sigma^v + \varepsilon_v
\bigr),
\end{equation}
where
\begin{eqnarray*}
\Sigma_v & =& \mathbb{E}^{\ast} \biggl[\mathbh{1}_{v \neq\rho(T)}
\sum_{e \in E (T_v)} \exp\biggl(- \sum
_{u \in[\![ e^-, v ]\!]_V} \bigl(\deg(u,T_v) + \deg p (v) \bigr)
\frac{t}{a_n} \biggr) \Big| A_n(T) \biggr],
\\
\Sigma^v & =& \mathbb{E}^{\ast} \biggl[\mathbh{1}_{v \neq\rho(T)}
\sum_{e \in E (T^v) \setminus\{e(v)\}} \exp\biggl(- \sum
_{u \in
[\![ e^-, p (v) ]\!]_V} \deg\bigl(u,T^v\bigr) \frac{t}{a_n}
\biggr) \Big| A_n(T) \biggr]
\end{eqnarray*}
and
\begin{eqnarray*}
\varepsilon_v &=& \mathbb{E}^{\ast} \biggl[
\mathbh{1}_{v \neq\rho
(T)} \exp\biggl(-\deg p (v) \frac{t}{a_n} \biggr) \Big|
A_n(T) \biggr].
\end{eqnarray*}

For the first term, we have
\begin{eqnarray*}
&&\Sigma_v \leq\mathbb{E}^{\ast} \biggl[\mathbh{1}_{v \neq\rho(T)}
\sum_{e \in E (T_v)} \exp\biggl(- \sum
_{u \in[\![
\rho(T_v), e^- ]\!]_V} \deg(u,T_v) \frac
{t}{a_n} \biggr) \Big|
A_n(T) \biggr].
\end{eqnarray*}
Since $\llvert V(T)\rrvert= \llvert V(T_v)\rrvert
+ \llvert V (T^v)\rrvert- 1$, this gives
\begin{eqnarray*}
\Sigma_v &\leq& \mathop{\sum_{m=1}}_{P_m \neq0}^{n-1}
P^{\ast}_{m,n} \mathbb{E}^{\ast} %
\lleft[ \sum_{e \in E (T_v)} \exp\biggl(- \sum
_{u
\in[\![ \rho(T_v), e^- ]\!]_V} \deg(u,T_v) \frac
{t}{a_n} \biggr)
\bigg|\rright.
\\
&&\hspace*{159pt}
\lleft.\begin{array} {l}
\bigl\llvert V (T_v)\bigr\rrvert= m+1,
\\[3pt]
\bigl\llvert V \bigl(T^v\bigr)\bigr\rrvert= n - m + 1
\end{array}
\rright]
\end{eqnarray*}
[$m=n$ would correspond to the case where $v = \rho(T)$, and $m=0$ to
the case where $E(T_v) = \varnothing$]. Proposition~\ref{TLoisTv} gives
that the trees $T_v$ and $T^v$ are independent, with $T_v$ being a
Galton--Watson tree. Hence,
%
%e15 #&#
%
\begin{eqnarray}\label{EMajEspMu1}
\qquad\Sigma_v & \leq&\mathop{\sum_{m=1}}_{P_m \neq0}^{n-1}
P^{\ast}_{m,n} \mathbb{E}^{\ast} \biggl[\sum
_{e \in E (T)} \exp\biggl(- \sum_{u \in
[\![ \rho(T), e^- ]\!]_V}
\deg(u,T) \frac
{t}{a_n} \biggr) \Big|
A_m(T) \biggr]
\nonumber\\[-8pt]\\[-8pt]\nonumber
& \leq&\mathop{\sum_{m=1}}_{P_m \neq0}^{n-1}
P^{\ast}_{m,n} m E_{m,n} (t).
\end{eqnarray}

For the second term, we use the correspondence between $E (T^v)
\setminus\{e(v)\}$ and $E (\hat{T}^{\hat{v}}) \setminus\{\hat
{e}_{\hat{v}}\}$, and the fact that $\rho(\hat{T}^{\hat{v}}) = p(v)$:
\begin{eqnarray*}
\Sigma^v &=& \mathbb{E}^{\ast} \biggl[\mathbh{1}_{v \neq\rho(T)}
\sum_{e \in E (\hat{T}^{\hat{v}}) \setminus\{\hat{e}_{\hat
{v}}\}} \exp\biggl(- \sum
_{u \in[\![ \rho(\hat
{T}^{\hat{v}}), e^- ]\!]_V} \deg\bigl(u,\hat{T}^{\hat{v}}\bigr)
\frac{t}{a_n} \biggr) \Big| A_n(T) \biggr].
\end{eqnarray*}
This gives
\begin{eqnarray*}
\Sigma^v &\leq&\mathbb{E}^{\ast} \biggl[\sum
_{e \in E (\hat{T}^{\hat
{v}})} \exp\biggl(- \sum_{u \in[\![ \rho(\hat{T}^{\hat{v}}),
e^- ]\!]_V}
\deg\bigl(u,\hat{T}^{\hat{v}}\bigr) \frac{t}{a_n} \biggr) \Big|
A_n(T) \biggr].
\end{eqnarray*}
Using the fact that $T^v$ and $\hat{T}^{\hat{v}}$ have the same law
under $\GW^{\ast}$, we get
\begin{eqnarray*}
&&\Sigma^v \leq\mathbb{E}^{\ast} \biggl[\sum
_{e \in E (T^v)} \exp\biggl(- \sum_{u \in[\![ \rho(T^v), e^- ]\!]_V}
\deg\bigl(u,T^v\bigr) \frac{t}{a_n} \biggr) \Big| A_n(T)
\biggr].
\end{eqnarray*}
Seeing $E(T^v)$ as a subset of $E(T)$, we can write
%
%e16 #&#
%
\begin{equation}
\label{EMajEspMu2}
\qquad\Sigma^v \leq\mathbb{E}^{\ast} \biggl[\sum
_{e \in E (T)} \exp\biggl(- \sum
_{u \in[\![ \rho(T), e^- ]\!]_V} \deg(u,T) \frac{t}{a_n} \biggr) \Big|
A_n(T) \biggr] = n E_n (t).
\end{equation}

For the third term, we simply notice that
%
%e17 #&#
%
\begin{equation}
\label{EMajEspMu3} \varepsilon_v \leq\frac{n}{n+1}
e^{-t/a_n}.
\end{equation}

Putting together (\ref{EMajEspMu1}), (\ref{EMajEspMu2}) and (\ref
{EMajEspMu3}) into (\ref{EDecEspMu}), we finally get
\begin{eqnarray*}
&&\mathbb{E} \bigl[n \mu_{n,\xi_n} (t) \bigr] \leq e^{-t/a_n} +
\biggl(1+\frac
{1}{n} \biggr) \biggl(n E_n (t) + \mathop{\sum
_{m=1}}_{P_m \neq
0}^{n-1}
P^{\ast}_{m,n} m E_{m,n} (t) \biggr).
\end{eqnarray*}
Thus,
\begin{eqnarray*}
&&\mathbb{E} \bigl[\mu_{n,\xi_n} (t) \bigr] \leq\frac{1}{n}
e^{-t/a_n} + \biggl(1+\frac
{1}{n} \biggr) \biggl(E_n (t)
+ \mathop{\sum_{m=1}}_{P_m \neq
0}^{n-1}
P^{\ast}_{m,n} \frac{m}{n} E_{m,n} (t)
\biggr).
\end{eqnarray*}\upqed
\end{pf*}

Next, we compute $E_{m,n} (t)$. To this end, we introduce two new
independent sequences of i.i.d. variables:
\begin{itemize}
\item$(\hat{Z}_i)_{i \geq1}$ with law $\hat{\nu}$, where $\hat
{\nu}$ is the size-biased version of $\nu$;
\item$(N_i)_{i \geq1}$, with same law as the number of vertices of a
Galton--Watson tree with offspring distribution $\nu$.
\end{itemize}
For all $k,h \in\mathbb{N}$, we also write
\begin{eqnarray*}
\hat{S}_h &=& \sum_{i=1}^{h}
\hat{Z}_i \quad\mbox{and} \quad Y_k = \sum
_{i=1}^k N_i.
\end{eqnarray*}

%le2.11 #&#
%
\begin{lem}
For every $m,n \in\mathbb{N}$ such that $m \leq n$ and $P_m \neq0$,
one has
%
%e18 #&#
%
\begin{equation}
\label{EExprEn} E_{m,n} (t) = \frac{1}{m P_m} \sum
_{1 \leq h \leq k \leq m} e^{-kt/a_n} \mathbb{P}(\hat{S}_h =
k) \mathbb{P} (Y_{k-h+1} = m-h+1 ).
\end{equation}
\end{lem}

\begin{pf}
We first note that relation (\ref{EDefEn}) can be written otherwise,
using the one-to-one correspondence $e \mapsto e^+$ between $E (T)$ and
$V (T) \setminus\{\rho(T)\}$:
\begin{eqnarray*}
E_{m,n} (t) & =& \frac{1}{m} \mathbb{E} \biggl[\sum
_{v \in V (T)
\setminus{\rho(T)}} \exp\biggl(- \sum_{u \in[\![ \rho(T), p(v) ]\!]_V}
\deg(u, T) \frac{t}{a_n} \biggr) \Big| \bigl\llvert E (T)\bigr\rrvert= m
\biggr].
\end{eqnarray*}
We thus have
\begin{eqnarray*}
E_{m,n} (t) & =& \frac{1}{m P_m} \mathbb{E} \biggl[\sum
_{v \in V (T)
\setminus{\rho(T)}} \exp\biggl(- \sum_{u \in
[\![ \rho(T), p(v) ]\!]_V}
\deg(u, T) \frac{t}{a_n} \biggr), \bigl\llvert E (T)\bigr\rrvert= m
\biggr]
\\
& =& \frac{1}{m P_m} \mathbb{E}^{\ast} %
\biggl[
\mathbh{1}_{v \neq\rho(T)} \exp\biggl(- \sum_{u \in
[\![ \rho(T), p(v) ]\!]_V}
\deg(u, T) \frac{t}{a_n} \biggr), \bigl\llvert E (T)\bigr\rrvert= m
\biggr].
\end{eqnarray*}
We now use the following description of a typical pointed tree $(T,v)$
under $\GW^{\ast}$ (see the proof of Proposition 2 of \cite{BerMi} and
\cite{LPP}):
\begin{itemize}
\item The ``law'' under $\GW^{\ast}$ of the distance $h(v)$ of the
pointed vertex $v$ to the root is the counting measure on $\mathbb
{N}\cup\{0\}$.
\item Conditionally on $h(v)=h$, the subtrees $T_v$ and $T^v$ are
independent, with $T_v$ being a Galton--Watson tree with offspring
distribution $\nu$, and $T^v$ having $\GW_h^{\ast}$ law, which can be
described as follows. $T^v$ has a distinguished branch $B = \{ u_1 =
\rho(T^v), u_2, \ldots, u_{h+1}=v \}$ of length $h$. Every vertex of
$T^v$ has an offspring that is distributed independently of the other
vertices, with offspring distribution $\nu$ for the vertices in
$V(T^v) \setminus B$, $\hat{\nu}$ for the vertices $u_1,\ldots,u_h$,
and $u_{h+1}$ having no descendants. The tree $T^v$ can thus be
constructed inductively from the root $u_1$, by choosing the $i$th
vertex $u_i$ of the distinguished branch uniformly at random from the
children of $u_{i-1}$.
\end{itemize}
In this representation, conditionally on having $h(v)=h$, $ [\![ \rho(T), p (v) ]\!]_V$ equals $\{u_1, \ldots, u_h \}$ and, for every
$i \in\{
1,\ldots,h\}$,
\[
\deg(u_i,T) = \hat{Z}_i.
\]
Besides, the total number of vertices of $T$ is the sum of the number
of vertices $h$ of $B \setminus\{v\}$, of $\llvert V(T_v)\rrvert$,
and of the\vspace*{1pt}
$\llvert V(T_u)\rrvert$ for $u$ such that $p(u) \in B
\setminus\{v\}$ and $u
\notin B$. There are $\sum_{i=1}^{h} (\hat{Z}_i - 1)$ such
trees $T_u$. Hence, under $\GW^{\ast}$:
\begin{eqnarray*}
&&\bigl\llvert E(T)\bigr\rrvert= \bigl\llvert V(T)\bigr\rrvert-1
\stackrel{(d)}
{=}Y_{\sum_{i=1}^{h}
(\hat{Z}_i - 1 ) + 1} + h - 1.
\end{eqnarray*}
Thus,
\begin{eqnarray*}
E_{m,n} (t) & =& \frac{1}{m P_m} \sum_{1 \leq h}
\mathbb{E} \Biggl[\exp\Biggl(-\sum_{i=1}^{h}
\hat{Z}_i \frac{t}{a_n} \Biggr), Y_{\sum_{i=1}^{h} \hat{Z}_i - h + 1} = m-h+1
\Biggr]
\\
& =& \frac{1}{m P_m} \sum_{1 \leq h \leq k \leq m} e^{-kt/a_n}
\mathbb{P}(\hat{S}_h = k) \mathbb{P} (Y_{k-h+1} = m-h+1 ).
\end{eqnarray*}\upqed
\end{pf}

We now compute upper bounds for the terms $\mathbb{P} (Y_{k-h+1}
= m-h+1 )$,
$\mathbb{P}(\hat{S}_h = k)$ and $(m P_m)^{-1}$.

\subsubsection*{Upper bound for $\mathbb{P}(Y_{k-h+1}=m-h+1)$}
Recalling the notation of Section~\ref{SCodingTrees}, we have
\begin{eqnarray*}
\mathbb{P} (Y_k = n ) & =& \mathbb{P} (W_n = -k \mbox{ and, }\forall p < n, W_p > -k )
\\
& =& \frac{k}{n} \mathbb{P} (W_n = -k ).
\end{eqnarray*}
The second equality is given by the cyclic lemma (see \cite{PitCSP}, Lemma~6.1). We will now use the fact, given by Theorem~\ref
{TLimLocale}, that
%
%e19 #&#
%
\begin{equation}
\label{ELimLocaleZ} \lim_{n \rightarrow\infty} \sup_{k \in\mathbb{N}}
\biggl\llvert a_n \mathbb{P} (W_n = -k ) -
p_1^{(\alpha)} \biggl(- \frac
{k}{a_n} \biggr)\biggr\rrvert=
0.
\end{equation}
For all $s,x \in(0,\infty)$, we have
\begin{eqnarray*}
x p_s^{(\alpha)} (-x) &=& s q_{x}^{(1/\alpha)}
(s)
\end{eqnarray*}
(see, e.g., \cite{BerLP}, Corollary VII.1.3). Taking $s=1$ and
$x=k/a_n$, this gives
\begin{eqnarray*}
&& \frac{k}{a_n} p_1^{(\alpha)} \biggl(-\frac{k}{a_n}
\biggr) = q_{k/a_n}^{(1/\alpha)} (1).
\end{eqnarray*}
Thus,
\begin{eqnarray*}
&& n \mathbb{P} (Y_n = k ) - q_{k/a_n}^{(1/\alpha)} (1) =
\frac{k}{a_n} \biggl(a_n \mathbb{P} (W_n = -k ) -
p_1^{(\alpha)} \biggl(-\frac
{k}{a_n} \biggr) \biggr),
\end{eqnarray*}
and we get
\begin{eqnarray*}
\mathbb{P} (Y_k = n ) & \leq&\frac{1}{n} \bigl(\bigl\llvert n
\mathbb{P} (Y_n = k ) - q_{k/a_n}^{(1/\alpha)} (1)\bigr
\rrvert+ q_{k/a_n}^{(1/\alpha)} (1) \bigr)
\\
& \leq&\frac{k}{n a_n} \biggl( \biggl\llvert a_n \mathbb{P}
(W_n = -k ) - p_1^{(\alpha)} \biggl(-
\frac{k}{a_n} \biggr)\biggr\rrvert+ p_1^{(\alpha)} \biggl(-
\frac
{k}{a_n} \biggr) \biggr).
\end{eqnarray*}
Since $p_1^{(\alpha)}$ is bounded and (\ref{ELimLocaleZ}) holds,
there exists a constant $M \in(0,\infty)$ such that, for all $k, n
\in\mathbb{N}$,
\[
\mathbb{P} (Y_k = n ) \leq\frac{k}{n a_n} M.
\]
Thus, we have the following upper bound:
%
%e20 #&#
%
\begin{equation}
\label{EmajPY} \mathbb{P} (Y_{k-h+1} = m-h+1 ) \leq\frac{k-h+1}{(m-h+1)
a_{m-h+1}} M.
\end{equation}

\subsubsection*{Upper bound for $\mathbb{P}(\hat{S}_h=k)$}
We use Theorem~\ref{TLimLocale} for the i.i.d. variables
$(\hat
{Z}_i)_{i \in\mathbb{N}}$. Let $\hat{A} \in R_{\alpha-1}$ be an increasing
function given by (i), such that
\begin{eqnarray*}
&&\mathbb{P} (\hat{Z}_1 > r ) \sim\frac{1}{\hat{A} (r)},
\end{eqnarray*}
and $\hat{a}$ be the inverse function of $\hat{A}$. Then
\begin{eqnarray*}
&&\lim_{h \rightarrow\infty} \sup_{k \in\mathbb{N}} \biggl\llvert
\hat{a}_h \mathbb{P} (\hat{S}_h = k ) -
q_1^{(\alpha-
1)} \biggl(\frac{k}{\hat{a}_h} \biggr)\biggr\rrvert=
0.
\end{eqnarray*}
Using the fact that $q_1^{(\alpha- 1)}$ is bounded, and writing
\begin{eqnarray*}
&&\mathbb{P} (\hat{S}_h = k ) \leq\frac{1}{\hat{a}_h} \biggl( \biggl
\llvert\hat{a}_h \mathbb{P} (\hat{S}_h = k ) -
q_1^{(\alpha- 1)} \biggl(\frac{k}{\hat{a}_h} \biggr)\biggr\rrvert+
q_1^{(\alpha- 1)} \biggl(\frac{k}{\hat{a}_h} \biggr) \biggr),
\end{eqnarray*}
we get the existence of a constant $M' \in(0,\infty)$ such that, for
all $h, k \in\mathbb{N}$,
%
%e21 #&#
%
\begin{equation}
\label{EmajPS} \mathbb{P} (\hat{S}_h = k ) \leq\frac{M'}{\hat{a}_h}.
\end{equation}

Furthermore, when $h$ is small enough, we have a better bound for
$\mathbb{P}(\hat{S}_h = k)$:

%le2.12 #&#
%
\begin{lem} \label{TDoney}
Using the previous notation, if hypothesis (\ref{HMajPZ=r}) holds,
then there exist constants $B, C$ such that for all $k \in\mathbb
{N}$, for
all $h$ such that $k / \hat{a}_h \geq B$,
\[
\mathbb{P} (\hat{S}_h = k ) \leq C \frac{h}{k \hat{A} (k)}.
\]
\end{lem}

This result is an adaptation of a theorem by Doney \cite{Don}. The
main ideas of the proof, which is rather technical, will be given in
the \hyperref[app]{Appendix}. %A more complete version of this proof can be found in
%the arXiv version of this paper.

Besides, using the fact that $A$ is regularly varying and an Abel
transformation of $\mathbb{P}(\hat{Z}>r)$, we get that
%
%e22 #&#
%
\begin{equation}
\label{ELienA-hatA} \frac{1}{\hat{A} (r)} \sim\frac{\alpha r}{A(r)}
\qquad\mbox{as } r
\rightarrow\infty.
\end{equation}
%
% see July 2013 version for detailed proof.

\subsubsection*{Upper bound for $(mP_m)^{-1}$}
We have
\begin{eqnarray*}
P_m &=& \mathbb{P} \bigl(\bigl\llvert E (\mathcal{T})\bigr\rrvert=
m \bigr) \sim\frac{p_1^{(\alpha)} (0)}{m a_m}
\end{eqnarray*}
(this is a straightforward consequence of the cyclic lemma and the
local limit theorem). This gives the existence of a constant $K \in
(0,\infty)$ which verifies, for all $m$ such that $P_m \neq0$,
%
%e23 #&#
%
\begin{equation}
\label{EmajPCardT} \frac{1}{m P_m} \leq K a_m.
\end{equation}

Before coming back to the proof of Corollary~\ref{TCor1}, we give
another useful result on regularly varying functions.

%le2.13 #&#
%
\begin{lem} \label{TLemmeFVR}
Fix $\beta\in(0,\infty)$. Let $f$ be a positive increasing function
in $R_{\beta}$ on $\mathbb{R}_+$, and $x_0$ a positive constant. For every
$\delta\in(0,\beta)$, there exists a constant $C_{\delta} \in
(0,\infty)$ such that, for all $x' \geq x \geq x_0$,
\begin{eqnarray*}
&& C_{\delta}^{-1} \biggl(\frac{x'}{x} \biggr)^{\beta- \delta}
\leq\frac{f(x')}{f(x)} \leq C_{\delta} \biggl(\frac{x'}{x}
\biggr)^{\beta+ \delta}.
\end{eqnarray*}
\end{lem}

This result is a consequence of
the Potter bounds (see, e.g., Theorem 1.5.6 of Bingham et~al. \cite{BGT}). In particular, it implies that for all $x$ bounded
away from $0$, for all $z \geq1$,
%
%e24 #&#
%
\begin{equation}
\label{EFVR1} C_{\delta}^{-1} z^{\beta- \delta} \leq
\frac{f(x z)}{f(x)} \leq C_{\delta} z^{\beta+ \delta},
\end{equation}
and likewise, for all $x \in(0,\infty)$, $z \leq1$ such that $xz$ is
bounded away from $0$,
%
%e25 #&#
%
\begin{equation}
\label{EFVR2} C_{\delta}^{-1} z^{\beta+ \delta} \leq
\frac{f(x z)}{f(x)} = \frac{f(x z)}{f (x z z^{-1})} \leq C_{\delta}
z^{\beta- \delta}.
\end{equation}

We can finally state the following.

%le2.14 #&#
%
\begin{lem} \label{TLimUnifIn,l}
We have
%
%e26 #&#
%
\begin{equation}
\label{ELimUnifIn,l1} \lim_{l \rightarrow\infty} \sup_{n \in\mathbb{N}}
\int
_{2^l}^{\infty} E_n (t) \,dt = 0
\end{equation}
and
\begin{eqnarray*}
%\label{ELimUnifIn,l2}
&&\lim_{l \rightarrow\infty} \sup_{n \in\mathbb{N}} \mathop{
\sup_{1 \leq
m \leq n}}_{P_m \neq0} \int_{2^l}^{\infty}
\frac{m}{n} E_{m,n} (t) \,dt = 0.
\end{eqnarray*}
\end{lem}

\begin{pf}
For every $n, l \in\mathbb{N}$, we let
\begin{eqnarray*}
&& I_{n,l} = \int_{2^l}^{\infty}
E_n (t) \,dt.
\end{eqnarray*}
Putting together (\ref{EExprEn}) and (\ref{EmajPCardT}), we have
\begin{eqnarray*}
&& E_n (t) \leq K a_n \sum_{k=1}^{n}
\sum_{h=1}^{k} e^{-kt/a_n} \mathbb
{P} (\hat{S}_h = k ) \mathbb{P} (Y_{k-h+1} = n-h+1 ).
\end{eqnarray*}
This yields
\begin{eqnarray*}
I_{n,l} &\leq& K a_n^2 \sum
_{k=1}^{n} \sum_{h=1}^{k}
\frac{1}{k} e^{-2^l k/a_n} \mathbb{P} (\hat{S}_h = k )
\mathbb{P} (Y_{k-h+1} = n-h+1 ).
\end{eqnarray*}
Writing $h (n,k) = \hat{A} (k/B)\wedge\lfloor n/2 \rfloor$ and $h'
(n,k) = k
\wedge\lfloor n/2 \rfloor$, we split this sum into three parts:
\begin{eqnarray*}
I^1_{n,l} & =& a_n^2 \sum
_{k=1}^{n} \sum_{h=1}^{h (n,k)}
\frac{1}{k} e^{-2^l k/a_n} \mathbb{P} (\hat{S}_h = k )
\mathbb{P} (Y_{k-h+1} = n-h+1 ),
\\
I^2_{n,l} & =& a_n^2 \sum
_{k=1}^{n} \sum_{h=h (n,k) + 1}^{h' (n,k)}
\frac{1}{k} e^{-2^l k/a_n} \mathbb{P} (\hat{S}_h = k )
\mathbb{P} (Y_{k-h+1} = n-h+1 ),
\\
I^3_{n,l} & =& a_n^2 \sum
_{k=1}^{n} \sum_{h=h' (n,k) + 1}^{k}
\frac
{1}{k} e^{-2^l k/a_n} \mathbb{P} (\hat{S}_h = k )
\mathbb{P} (Y_{k-h+1} = n-h+1 ).
\end{eqnarray*}
Our first goal is to show that, for $i = 1, 2, 3$,
\begin{eqnarray*}
&&\lim_{l \rightarrow\infty} \sup_{n \in\mathbb{N}} I_{n,l}^i
= 0.
\end{eqnarray*}

Let us first examine $I_{n,l}^1$. Since $a$ is increasing, the upper
bound (\ref{EmajPY}) gives, for $n-h+1 \geq n/2$,
%
%e27 #&#
%
\begin{eqnarray}\label{EmajPY1-2}
\mathbb{P} (Y_{k-h+1} = n-h+1 ) & \leq& M \frac
{k-h+1}{(n-k+1)a_{n-k+1}}
\nonumber\\[-8pt]\\[-8pt]\nonumber
& \leq&2M \frac{k}{n a_{n/2}}.
\end{eqnarray}
Thus, we have
\begin{eqnarray*}
&& I_{n,l}^1 \leq2M \frac{a_n^2}{n a_{n/2}} \sum
_{k=1}^{n} e^{-2^l k /
a_n} \sum
_{h=1}^{h (n,k)} \mathbb{P} (\hat{S}_h = k
).
\end{eqnarray*}
Turning the first sum into an integral, and using the substitution $y'
= y /a_n$, we get
\begin{eqnarray*}
I_{n,l}^1 & \leq&2M \frac{a_n^2}{n a_{n/2}} \int
_1^{\infty} dy\,e^{-
2^l \lfloor y \rfloor/ a_n} \Biggl(\sum
_{h=1}^{h (n,\lfloor y
\rfloor)} \mathbb{P} \bigl(\hat{S}_h
= \lfloor y \rfloor\bigr) \Biggr)
\\
& =& 2M \frac{a_n^3}{n a_{n/2}} \int_{1/a_n}^{\infty} dy\,e^{- 2^l
\lfloor a_n y \rfloor/ a_n} \Biggl(\sum_{h=1}^{h (n,\lfloor a_n y
\rfloor)}
\mathbb{P} \bigl(\hat{S}_h = \lfloor a_n y \rfloor
\bigr) \Biggr).
\end{eqnarray*}
Since $\hat{a}$ is increasing, for all $h \leq h (n,k)$, we have $\hat
{a}_h \leq k / B$. Therefore, Lemma~\ref{TDoney} gives
\[
\mathbb{P} (\hat{S}_h = k ) \leq C \frac{h}{k \hat{A} (k)}.
\]
This yields
\begin{eqnarray*}
I_{n,l}^1 & \leq&2CM \frac{a_n^3}{n a_{n/2}} \int
_{1/a_n}^{\infty} dy\,e^{- 2^l \lfloor a_n y \rfloor/ a_n} \Biggl(\sum
_{h=1}^{h
(n,\lfloor a_n y \rfloor)} \frac{h}{a_n y \hat{A} (a_n y)} \Biggr)
\\
& \leq&2CM \frac{a_n^3}{n a_{n/2}} \int_{1/a_n}^{\infty} dy\,e^{- 2^l
\lfloor a_n y \rfloor/ a_n} \biggl(\frac{\hat{A} (\lfloor a_n y
\rfloor/B)^2}{\lfloor a_n y \rfloor\hat{A} (\lfloor a_n y \rfloor
)} \biggr).
\end{eqnarray*}
We fix $\delta\in(0,(\alpha-1) \wedge(2-\alpha))$. Since $\hat
{A}$ is regularly varying with index $\alpha- 1$, for all $y \geq1 /
a_n$, we have
\begin{eqnarray*}
&&\frac{\hat{A} (\lfloor a_n y \rfloor/B)}{\hat{A} (\lfloor a_n y
\rfloor)} \leq\frac{C_{\delta}^{-1}}{B^{\alpha-1-\delta}}
\end{eqnarray*}
[we can use (\ref{EFVR1}) because $\lfloor a_n y \rfloor/B \geq1/B$
for all
$y \in(1/a_n, \infty), n \in\mathbb{N}$]. As a~consequence, there
exists a
positive constant $K_1$ such that
\begin{eqnarray*}
I_{n,l}^1 & \leq& K_1 \frac{a_n^3}{n a_{n/2}} \int
_{1/a_n}^{\infty} dy\,e^{- 2^l \lfloor a_n y \rfloor/ a_n} \biggl(
\frac{\hat{A}
(\lfloor a_n y \rfloor)}{\lfloor a_n y \rfloor} \biggr) = K_1 J_{n,l}.
\end{eqnarray*}
Therefore, it suffices to show that
%
%e28 #&#
%
\begin{equation}
\label{ELimUnifJn,l} \lim_{l \rightarrow\infty} \sup_{n \in\mathbb{N}}
J_{n,l} = 0.
\end{equation}
To this end, we use the upper bounds~(\ref{EFVR1}) and~(\ref{EFVR2}),
with $x = a_n$ and $y = \lfloor a_n y \rfloor/ a_n$ ($x$~and~$xy$ being,
resp., greater than $a_0$ and $1$):
\begin{eqnarray*}
&&\frac{\hat{A} (\lfloor a_n y \rfloor)}{\hat{A} (a_n)} \leq C_{\delta}
\biggl( \biggl(\frac{\lfloor a_n y \rfloor
}{a_n}
\biggr)^{\alpha-1+\delta} \vee\biggl(\frac{\lfloor a_n y \rfloor
}{a_n} \biggr)^{\alpha-1-\delta}
\biggr).
\end{eqnarray*}
Thus,
\begin{eqnarray*}
&&J_{n,l} \leq\frac{a_n^2 \hat{A} (a_n)}{n a_{n/2}} \int_{1/a_n}^{\infty}
dy\,e^{- 2^l \lfloor a_n y \rfloor/ a_n} \biggl( \biggl(\frac
{a_n}{\lfloor a_n y \rfloor} \biggr)^{2-\alpha-\delta} \vee
\biggl(\frac
{a_n}{\lfloor a_n y \rfloor} \biggr)^{2-\alpha+\delta} \biggr).
\end{eqnarray*}
Using the fact that $\lfloor a_n y \rfloor\geq a_n y - 1$, and the
change of
variable $y' = y - 1/a_n$, we get
\begin{eqnarray*}
J_{n,l} & \leq&\frac{a_n^2 \hat{A} (a_n)}{n a_{n/2}} \int_0^{\infty
}dy\,e^{- 2^l y} \biggl(\frac{1}{y^{2-\alpha-\delta}} \vee\frac
{1}{y^{2-\alpha+\delta}} \biggr).
\end{eqnarray*}
Now (\ref{ELienA-hatA}) gives that $\hat{A} (a_n) / n = \hat{A}
(a_n) / A (a_n) \sim1/ \alpha a_n$, so we have
\begin{eqnarray*}
&&\frac{a_n^2 \hat{A} (a_n)}{n a_{n/2}} \sim\frac{a_n}{\alpha a_{n/2}}.
\end{eqnarray*}
Since $a$ is regularly varying with index $1/\alpha$, the right-hand
term has a finite limit as $n$ goes to infinity. Therefore, $a_n^2 \hat
{A} (a_n) / n a_{n/2}$ is bounded uniformly in $n$. Hence, there exists
a constant $K \in(0,\infty)$ such that
\begin{eqnarray*}
&&\sup_{n \in\mathbb{N}} J_{n,l} \leq K \int_0^{\infty}
dy\,e^{- 2^l y} \biggl(\frac{1}{y^{2-\alpha
-\delta}} \vee\frac{1}{y^{2-\alpha+\delta}} \biggr).
\end{eqnarray*}
This yields (\ref{ELimUnifJn,l}) by taking the limit as $l$ goes to infinity.

For the second part, we can still use (\ref{EmajPY1-2}). As in the
first step, we get
\begin{eqnarray*}
&&I_{n,l}^2 \leq2M \frac{a_n^3}{n a_{n/2}} \int
_{1/a_n}^{\infty} dy\,e^{- 2^l \lfloor a_n y \rfloor/ a_n} \Biggl(\sum
_{h=h
(n,\lfloor a_n y \rfloor) + 1}^{h' (n,\lfloor a_n y
\rfloor)} \mathbb{P} \bigl(\hat{S}_h
= \lfloor a_n y \rfloor\bigr) \Biggr).
\end{eqnarray*}
Since the sum is null if $\hat{A} (\lfloor a_n y \rfloor/ B) >
\lfloor n/2 \rfloor$, we have
\begin{eqnarray*}
&&I_{n,l}^2 \leq2M \frac{a_n^3}{n a_{n/2}} \int
_{1/a_n}^{\infty} dy\,e^{- 2^l \lfloor a_n y \rfloor/ a_n} \Biggl(\sum
_{h=\hat{A}
(\lfloor a_n y \rfloor/
B) + 1}^{\infty} \mathbb{P} \bigl(\hat{S}_h
= \lfloor a_n y \rfloor\bigr) \Biggr).
\end{eqnarray*}
We now turn the remaining sum into an integral:
\begin{eqnarray*}
&&I_{n,l}^2 \leq2M \frac{a_n^3}{n a_{n/2}} \int
_{1/a_n}^{\infty} dy\,e^{- 2^l \lfloor a_n y \rfloor/ a_n} \int
_{\hat{A} (\lfloor a_n y
\rfloor/
B)}^{\infty} dx\, \mathbb{P} \bigl(
\hat{S}_{\lfloor x+1 \rfloor} = \lfloor a_n y \rfloor\bigr).
\end{eqnarray*}
Using the change of variable $x' = \hat{A} (\lfloor a_n y \rfloor/
B) x$ and
the upper bound (\ref{EmajPS}), this gives
\begin{eqnarray*}
&&I_{n,l}^2 \leq2MM' \frac{a_n^3}{n a_{n/2}} \int
_{1/a_n}^{\infty} dy\,e^{- 2^l \lfloor a_n y \rfloor/ a_n} \int
_{1}^{\infty} dx\, \frac
{\hat{A}
(\lfloor a_n y \rfloor/ B)}{\hat{a} (\lfloor\hat{A} (\lfloor
a_n y \rfloor/ B) x + 1 \rfloor)}.
\end{eqnarray*}
Since $\hat{a}$ is increasing, for all $x, y$, we have
\begin{eqnarray*}
&&\hat{a} \bigl(\bigl\lfloor\hat{A} \bigl(\lfloor a_n y \rfloor/ B
\bigr) x + 1 \bigr\rfloor\bigr) \geq\hat{a} \bigl(\hat{A} \bigl(\lfloor
a_n y \rfloor/ B\bigr) x \bigr).
\end{eqnarray*}
Fix $\delta\in(0,1/(\alpha-1)-1)$. Inequality (\ref{EFVR1}) then
gives, for all $x \geq1$, $y \geq1/a_n$,
\begin{eqnarray*}
\hat{a} \bigl(\bigl\lfloor\hat{A} \bigl(\lfloor a_n y \rfloor/ B
\bigr) x + 1 \bigr\rfloor\bigr) & \geq& c_{\delta}^{-1} \hat{a}
\bigl(\hat{A} \bigl(\lfloor a_n y \rfloor/ B\bigr) \bigr)
x^{1/(\alpha-1)-\delta}
\\
& =& c_{\delta}^{-1} \frac{\lfloor a_n y \rfloor}{B} x^{1/(\alpha
-1)-\delta}.
\end{eqnarray*}
Thus, there exist constants $K_2, K'_2 \in(0,\infty)$ such that
\begin{eqnarray*}
&&I_{n,l}^2 \leq K_2 \frac{a_n^3}{n a_{n/2}} \int
_{1/a_n}^{\infty} dy\,e^{- 2^l
\lfloor a_n y \rfloor/ a_n} \frac{\hat{A} (\lfloor a_n y \rfloor/
B)}{\lfloor a_n y \rfloor}
\int_{1}^{\infty} \frac{dx}{x^{1/(\alpha-1)-\delta}} =
K'_2 J_{n,l},
\end{eqnarray*}
and (\ref{ELimUnifJn,l}) also gives the conclusion.

For the third part, since the terms with indices $k \leq\lfloor n/2
\rfloor$
are null, we simply use the bounds $\mathbb{P}
(Y_{k-h+1}=n-h+1 ) \leq1$ and
$\mathbb{P}(\hat{S}_h=k) \leq1$:
\begin{eqnarray*}
I_{n,l}^3 & \leq& a_n^2 \sum
_{k=\lfloor n/2 \rfloor+1}^{n} \sum
_{h=1}^{k} \frac
{1}{k} e^{-2^l k/a_n}
\\
& \leq& a_n^2 e^{-n 2^l/2 a_n} \sum
_{k=\lfloor n/2 \rfloor+1}^{n} 1
\\
& \leq& n a_n^2 e^{-n 2^l/2 a_n}.
\end{eqnarray*}
This quantity tends to $0$ as $l$ goes to infinity, uniformly in $n$.
Indeed, for any $\kappa> 0$, the function $g_{\kappa}\dvtx  x \mapsto
x^{\kappa} e^{-x}$ is bounded by a constant $G_{\kappa}$, hence
\begin{eqnarray*}
&&I_{n,l}^3 \leq G_{\kappa} \frac{2^{\kappa} a_n^{2+\kappa}}{n^{\kappa-1}}
\cdot2^{- l \kappa}.
\end{eqnarray*}
For any $\varepsilon> 0$, there exists a constant $C_{\varepsilon}$ such
that $a_n \leq C_{\varepsilon} n^{1/\alpha+ \varepsilon}$ for all $n
\in
\mathbb{N}$. Therefore, the quantity $a_n^{2+\kappa} / n^{\kappa-1}$ is
bounded as soon as $\kappa> (2+\alpha) / (\alpha- 1)$. This
completes the proof of (\ref{ELimUnifIn,l1}).

For the second limit, we note that (\ref{EExprEn}) yields
\begin{eqnarray*}
&&\int_{2^l}^{\infty} E_{m,n} (t) \,dt =
\frac{a_n}{a_m} \int_{2^l}^{\infty} E_m
(t) \,dt,
\end{eqnarray*}
for all $m \leq n$ such that $P_m \neq0$. Thus,
\begin{eqnarray*}
&&\sup_{n \in\mathbb{N}} \mathop{\sup_{1 \leq m \leq n}}_{P_m \neq
0}
\int_{2^l}^{\infty} \frac{m}{n}
E_{m,n} (t) \,dt = \sup_{n \in\mathbb{N}} \mathop{\sup
_{1 \leq m \leq n}}_{P_m \neq0} \frac{m a_n}{n a_m} I_{m,l}.
\end{eqnarray*}
As a consequence, it is enough to show that $m a_n / n a_m$ is bounded
over $\{(m,n) \in\mathbb{N}^2\dvtx  m \leq n\}$. Now,
\begin{eqnarray*}
\sup\biggl\{ \frac{m a_n}{n a_m}\dvtx  m,n \in\mathbb{N}, m\leq n \biggr\}& \leq&\sup
\biggl\{ \frac{m a_{\lambda m}}{\lambda m a_m}\dvtx  m \in\mathbb{N},
\lambda\in(1,\infty) \biggr\}
\\
& \leq&\sup\biggl\{ \frac{a_{\lambda m}}{\lambda a_m}\dvtx  m \in\mathbb{N},
\lambda\in(1,\infty)
\biggr\}.
\end{eqnarray*}
Fix $\delta\in(0,1-1/\alpha)$. Since $a$ is a positive increasing
function in $R_{1/\alpha}$, Lemma~\ref{TLemmeFVR} shows the existence
of a constant such that, for all $m \in\mathbb{N}$, $\lambda\in(1,
\infty)$,
\begin{eqnarray*}
&&\frac{a_{\lambda m}}{a_m} \leq C_{\delta} \lambda^{1/\alpha+ \delta}.
\end{eqnarray*}
Hence, for all $\lambda\in(1, \infty)$,
\begin{eqnarray*}
&&\sup_{m \in\mathbb{N}} \frac{a_{\lambda m}}{\lambda a_m} \leq C_{\delta}
\lambda^{1/\alpha+ \delta- 1} \leq C_{\delta}.
\end{eqnarray*}\upqed
\end{pf}

\subsubsection*{Key estimates for the proof of Theorem \protect\ref{TMainThm}}
We conclude this section by giving two consequences of Lemma~\ref
{TLimUnifIn,l} which will be used in the proof of Theorem~\ref{TMainThm}.

%co2.15 #&#
%
\begin{cor} \label{TCor1}
It holds that
\begin{eqnarray*}
&&\lim_{l \rightarrow\infty} \sup_{n \in\mathbb{N}} \mathbb{E} \biggl[
\int_{2^l}^{\infty} \mu_{n,\xi_n} (t) \,dt \biggr] =
0.
\end{eqnarray*}
\end{cor}

\begin{pf}
Using (\ref{EmajEspMu}), we get
\begin{eqnarray*}
\sup_{n \in\mathbb{N}} \mathbb{E} \biggl[\int_{2^l}^{\infty}
\mu_{n,\xi_n} (t) \,dt \biggr] &\leq&\sup_{n \in\mathbb{N}}
\frac{a_n}{n} e^{-2^l/a_n}  + 2 \sup_{n \in
\mathbb{N}} \int
_{2^l}^{\infty} E_n (t) \,dt
\\
&&{} + 2 \sup_{n \in\mathbb{N}} \sup_{1 \leq m \leq n} \int
_{2^l}^{\infty} \frac{m}{n} E_{m,n} (t)
\,dt.
\end{eqnarray*}
Lemma~\ref{TLimUnifIn,l} shows that the last two terms tend to $0$ as
$l$ goes to infinity. For the first term, we use again the fact that
for any $\kappa> 0$, the function $g_{\kappa}\dvtx  x \mapsto x^{\kappa}
e^{-x}$ is bounded by a constant $G_{\kappa}$. Hence, for all $n \in
\mathbb{N}$,
\begin{eqnarray*}
&&\frac{a_n}{n} e^{-2^l/a_n} \leq G_{\kappa} \frac{a_n^{\kappa+1}}{n}
\cdot2^{-\kappa l}.
\end{eqnarray*}
Taking $\kappa< \alpha-1$, we get that $a_n^{\kappa+1}/n$ is
bounded, which completes the proof.
\end{pf}

%co2.16 #&#
%
\begin{cor} \label{TCor2}
There exists a constant $C$ such that, for all $n \in\mathbb{N}$,
\begin{eqnarray*}
&&\mathbb{E} \bigl[\delta'_n (0,\xi_n)
\bigr] \leq C.
\end{eqnarray*}
\end{cor}

\begin{pf}
Recalling the definition of $\delta'_n$, we get
\begin{eqnarray*}
\mathbb{E} \bigl[\delta'_n (0,\xi_n)
\bigr] & =& \mathbb{E} \biggl[\int_0^{\infty}
\mu_{n,\xi_n} (t) \,dt \biggr].
\end{eqnarray*}
Now the upper bound (\ref{EmajEspMu}) gives
\begin{eqnarray*}
\mathbb{E} \bigl[\delta'_n (0,\xi_n)
\bigr] & \leq&1 + \mathbb{E} \biggl[\int_1^{\infty}
\mu_{n,\xi_n} (t) \,dt \biggr]
\\
& \leq&1 + \frac{a_n}{n} e^{-1/a_n} + 2 \int_{1}^{\infty}
E_n (t) \,dt + 2 \sup_{1 \leq m \leq n} \int
_{1}^{\infty} \frac{m}{n} E_{m,n} (t)
\,dt.
\end{eqnarray*}
The second term is bounded as $n \rightarrow\infty$. Recall from the
proof of Lemma~\ref{TLimUnifIn,l} that
\begin{eqnarray*}
\int_{1}^{\infty} E_n (t) \,dt &=&
I_{n,0}  \leq I_{n,0}^1 + I_{n,0}^2
+ I_{n,0}^3
\leq\bigl(K_1 + K'_2\bigr)
J_{n,0} + I_{n,0}^3.
\end{eqnarray*}
Moreover, we have seen that for any $\delta> 0$, there exists a
constant $K$ such that
\begin{eqnarray*}
&&\sup_{n \in\mathbb{N}} J_{n,0} \leq K \int_0^{\infty}
dy\,e^{-y} \biggl( \frac{1}{y^{2-\alpha-\delta}} \wedge\frac
{1}{y^{2-\alpha+\delta
}} \biggr) <
\infty,
\end{eqnarray*}
and
\begin{eqnarray*}
&&I_{n,0}^3 \leq2 n a_n^2
e^{-n/a_n}
\end{eqnarray*}
is bounded as $n \rightarrow\infty$. Since we have seen at the end of
the proof of Lemma~\ref{TMajEspMu} that there exists a constant $K'$
such that for all $n \in\mathbb{N}$, $m \leq n$ such that $P_m \neq0$,
\begin{eqnarray*}
&&\int_{1}^{\infty} \frac{m}{n}
E_{m,n} (t) \,dt \leq K' \int_{1}^{\infty}
E_m (t) \,dt,
\end{eqnarray*}
this implies the corollary.
\end{pf}

%s3 #&#
\section{Proof of Theorem \texorpdfstring{\protect\ref{TMainThm}}{1.3}} \label{SProof}

%s3.1 #&#
\subsection{Identity in law between $\operatorname{Cut}_{\mathrm{v}}(\mathcal{T})$ and $\mathcal{T}$}\label{SEqldeltad}

In this section, we show that the semi-infinite matrices of the mutual
distance of uniformly sampled points in $\mathcal{T}$ and
$\operatorname{Cut}_{\mathrm{v}}(\mathcal{T})$ have
the same law. This justifies the existence of $\operatorname{Cut}_{\mathrm{v}}(\mathcal{T})$, as
explained in Section~\ref{SFragT}, and shows the identity in law
between $\mathcal{T}$ and $\operatorname{Cut}_{\mathrm{v}}(\mathcal{T})$. The structure of the
proof will be
similar to that of Lemma 4 in \cite{BerMi}. Precise descriptions of
the fragmentation processes we consider can be found in \cite{Mi03}
and \cite{Mi05}.

Recall that $(\xi(i))_{i \in\mathbb{N}}$ is a sequence of i.i.d. random
variables in $\mathcal{T}$, with law $\mu$, and $\xi(0) = 0$. Since
the law
of $\mathcal{T}$ is invariant under uniform rerooting (see, e.g.,
\cite{DuqLG05}, Proposition 4.8), and the definition of $\delta$ does not
depend on the choice of the root of $\mathcal{T}$, we may assume that
$\xi(1)=
\rho$.

%pr3.1 #&#
%
\begin{prop} \label{TEqldeltad}
It holds that
\[
\bigl( \delta\bigl(\xi(i), \xi(j)\bigr) \bigr)_{i,j \geq0} \stackrel{(d)}
{=} \bigl( d \bigl(\xi(i+1), \xi(j+1)\bigr) \bigr)_{i,j \geq0}.
\]
\end{prop}

\begin{pf}
Here, it is convenient to work on fragmentation processes taking values
in the set of the partitions of $\mathbb{N}$.

First, we introduce a process $\Pi$ which corresponds to our
fragmentation of $\mathcal{T}$ by saying that $i, j \in\mathbb{N}$
belong to the same
block of $\Pi(t)$ if and only if the path $ [\![ \xi(i), \xi(j) ]\!]_V$
does not intersect the set $\{b_k\dvtx  k \in I, t_k \leq t\}$ of the points
marked before time $t$. For every $i \in\mathbb{N}$, we let $B_i (t)$
be the
block of the partition $\Pi(t)$ containing $i$. Note that the
partitions $\Pi(t)$ are exchangeable, which justifies the existence of
the asymptotic frequencies $\lambda(B_i (t))$ of the blocks $B_i (t)$, where
\begin{eqnarray*}
\lambda(B) &=& \lim_{n \rightarrow\infty} \frac{1}{n} \bigl\llvert B
\cap\{1,\ldots,n\}\bigr\rrvert.
\end{eqnarray*}
Then we define
\begin{eqnarray*}
\sigma_i (t) &=& \inf\biggl\{ u \geq0\dvtx  \int_0^u
\lambda\bigl(B_i (s)\bigr) \,ds >t \biggr\}.
\end{eqnarray*}
We use $\sigma_i$ as a time-change, letting $\Pi' (t)$ be the
partition whose blocks are the sets $B_i (\sigma_i (t))$ for $i \in
\mathbb{N}
$. Note that this is possible because $B_i (\sigma_i (t))$ and $B_j
(\sigma_j (t))$ are either equal or disjoint.

We define a second fragmentation $\Gamma$, which results from cutting
the stable tree $\mathcal{T}$ at its heights. For every $x, y \in
\mathcal{T}$, we let
$x \wedge y$ denote the branch-point between $x$ and $y$, that is, the
unique point such that $ [\![ \rho, x \wedge y ]\!]_V = [\![ \rho, x ]\!]_V
\cap[\![ \rho, y ]\!]_V$. With this
notation, we say that $i, j \in\mathbb{N}$
belong to the same block of $\Gamma(t)$ if and only if $d (\rho,\xi
(i+1) \wedge\xi(j+1)) > t$.

Then we have the following link between the two fragmentations.

%le3.2 #&#
%
\begin{lem}
The fragmentation processes $\Pi'$ and $\Gamma$ have the same law.
\end{lem}

\begin{pf}
Miermont has shown in \cite{Mi05}, Theorem 1, that the process $\Pi$
is a self-similar fragmentation with index $1/\alpha$, erosion
coefficient $0$ and dislocation measure $\Delta_{\alpha}$ known
explicitly. Applying Theorem 3.3 in \cite{BerFCP}, we get that the
time-changed fragmentation $\Pi'$ is still self-similar, with index
$1/\alpha-1$, erosion coefficient $0$ and the same dislocation measure
$\Delta_{\alpha}$. Now the process $\Gamma$ is also self-similar,
with the same characteristics as $\Pi'$ (see \cite{Mi03}, Proposition 1, Theorem 1). Thus, $\Gamma$ and $\Pi'$ have the same law.
\end{pf}

Using the law of large numbers, we note that $\lambda(B_i (s)) = \mu
_{\xi(i)} (s)$ almost surely. As a consequence, $\sigma_i (t) =
\infty$ for $t = \int_0^{\infty} \lambda(B_i (s)) \,ds = \delta
(0,\xi(i))$, which means that $\delta(0,\xi(i))$ can be seen as the
first time when the singleton $\{i\}$ is a block of~$\Pi'$. Recalling
that $d (\rho,\xi(i+1)) = d (\xi(1),\xi(i+1))$ is the first time
when $\{i\}$ is a block of $\Gamma$, we get
%
%e29 #&#
%
\begin{equation}
\label{EEql1} \bigl(\delta\bigl(0,\xi(i)\bigr) \bigr)_{i \geq1}
\stackrel{(d)} {=} \bigl(d \bigl(\xi(1),\xi(i+1)\bigr) \bigr)_{i \geq1}.
\end{equation}

Similarly, for any $i \neq j \in\mathbb{N}$,
\begin{eqnarray*}
\delta\bigl(0,\xi(i) \wedge\xi(j)\bigr) & =& \frac{1}{2} \bigl(\delta
\bigl(0,\xi(i)\bigr) + \delta\bigl(0,\xi(j)\bigr) - \delta\bigl(\xi
(i), \xi(j)
\bigr)\bigr)
\\
& =& \int_0^{\tau(i,j)} \lambda\bigl(B_i
(s)\bigr) \,ds,
\end{eqnarray*}
where $\tau(i,j)$ denotes the first time when a mark appears on the
segment $ [\![ \xi(i), \xi(j) ]\!]_V$.
Thus, $\delta(0,\xi(i) \wedge
\xi(j))$ is the first time when the blocks containing $i$ and $j$ are
separated in $\Pi'$. In terms of the fragmentation $\Gamma$, this
corresponds to $d (\rho,\xi(i+1) \wedge\xi(j+1))$. Hence,
\begin{eqnarray*}
&& \bigl(\delta\bigl(0,\xi(i) \wedge\xi(j)\bigr) \bigr)_{i,j \geq1}
\stackrel{(d)} {=} \bigl(d \bigl(\xi(1),\xi(i+1) \wedge\xi(j+1)\bigr)
\bigr)_{i,j \geq1},
\end{eqnarray*}
and this holds jointly with (\ref{EEql1}). This entails the proposition.
\end{pf}

%s3.2 #&#
\subsection{Weak convergence}

We first establish the convergence for the cut-tree $\operatorname{Cut}_{\mathrm{v}}'(\mathcal{T}_n)$
endowed with the modified distance $\delta'_n$, as defined in
Section~\ref{SModDist}.

%pr3.3 #&#
%
\begin{prop}
There is the joint convergence
\[
\biggl( \frac{a_n}{n} \mathcal{T}_n, \operatorname{Cut}_{\mathrm{v}}'
(\mathcal{T}_n) \biggr) \mathop{ \longrightarrow}_{n \rightarrow\infty}^{(d)}\,
\bigl(\mathcal{T}, \operatorname{Cut}_{\mathrm{v}}(\mathcal{T}) \bigr)
\]
in $\mathbb{M} \times\mathbb{M}$.
\end{prop}

\begin{pf}
Proposition~\ref{TFstJointCv} shows that for every fixed integer $l$,
there is the joint convergence
\begin{eqnarray*}
\frac{a_n}{n} \mathcal{T}_n  &\displaystyle \mathop{ \longrightarrow}_{n \rightarrow
\infty}^{(d)}&
\mathcal{T},
\\
\Biggl( 2^{-l} \sum_{j=1}^{4^l}
\mu_{n,\xi_n (i)} \bigl(j 2^{-l} \bigr) \Biggr)_{i \in\mathbb{N}}
&\displaystyle\mathop{ \longrightarrow}_{n
\rightarrow\infty}^{(d)}& \Biggl( 2^{-l} \sum
_{j=1}^{4^l} \mu_{\xi(i)} \bigl(j
2^{-l} \bigr) \Biggr)_{i \in\mathbb{N}}.
\end{eqnarray*}
Let
\begin{eqnarray*}
\Delta_{n,l} (i) &=& \mathbb{E} \Biggl[\Biggl\llvert\int
_0^{\infty} \mu_{n,\xi_n (i)} (t) \,dt -
2^{-l} \sum_{j=1}^{4^l}
\mu_{n,\xi_n (i)} \bigl(j2^{-l} \bigr)\Biggr\rrvert\Biggr].
\end{eqnarray*}
For any nonincreasing function $f\dvtx  \mathbb{R}_+ \rightarrow
[0,1 ]$, we have the upper bound
%
%e30 #&#
%
\begin{equation}
\label{EMajIntf} \Biggl\llvert\int_0^{\infty} f (t)
\,dt - 2^{-l} \sum_{j=1}^{4^l} f
\bigl(j2^{-l} \bigr)\Biggr\rrvert\leq2^{-l} + \int
_{2^l}^{\infty} f (t) \,dt.
\end{equation}
Applying this inequality to $\mu_{n,\xi_n (i)}$ yields
\begin{eqnarray*}
&&\Delta_{n,l} (i) \leq2^{-l} + \mathbb{E} \biggl[\int
_{2^l}^{\infty} \mu_{n,\xi_n} (t) \,dt \biggr].
\end{eqnarray*}
Corollary~\ref{TCor1} now shows that
\begin{eqnarray*}
&&\lim_{l \rightarrow\infty} \sup_{n \in\mathbb{N}}
\Delta_{n,l} (i) = 0,
\end{eqnarray*}
and $\Delta_{n,l} (i)$ does not depend on $i$. Besides, Proposition
\ref{TEqldeltad} shows that
\begin{eqnarray*}
&&\delta\bigl(0,\xi(i)\bigr) = \int_0^{\infty}
\mu_{\xi(i)} (t) \,dt
\end{eqnarray*}
has the same law as $d (0,\xi(i))$ and, therefore, has finite mean.
% cf article LG-D: la densit{\chr"E9} de cette loi est $q_x^{(1-1/
%\alpha)} (1)$
%calculs:
%$$q_x^{(\beta)} (u) = x^{-1/\beta} q_1^{(beta)} \( x^{-1/\beta} u \)$$
%et
%\begin{eqnarray*}
%\int_0^{\infty} x q_x^{(\beta)} (1) \,dx & =& \int_0^{\infty} x^{1-1/
%\beta} q_1^{(\beta)} (x^{-1/\beta}) \,dx \\
%& =& \int_0^{\infty} y^{-\beta+1} q_1^{(\beta)} (y) \beta y^{-\beta-1}
%\,dy \\
%& =& \beta\int_0^{\infty} y^{-2\beta} q_1^{(\beta)} (y) \,dy
%\end{eqnarray*}
%+ existence des moments d'ordre n{\chr"E9}gatif pour loi stable ({
%\chr"E0} valeurs positives) (ref ?)
As a consequence,
\begin{eqnarray*}
&& \mathbb{E} \Biggl[\Biggl\llvert\int_0^{\infty}
\mu_{\xi(i)} (t) \,dt - 2^{-l} \sum_{j=1}^{4^l}
\mu_{\xi(i)} \bigl(j2^{-l} \bigr)\Biggr\rrvert\Biggr]
\\
&&\qquad \leq
2^{-l} + \mathbb{E} \biggl[\int_{2^l}^{\infty}
\mu_{\xi(i)} (t) \,dt \biggr]
\\
&&\qquad \mathop{ \longrightarrow}_{l \rightarrow\infty} 0,
\end{eqnarray*}
and the left-hand side does not depend on $i$. We conclude that
\[
\bigl(\delta'_n \bigl(0, \xi_n(i)\bigr)
\bigr)_{i \in\mathbb{N}} \mathop{ \longrightarrow}_{n \rightarrow\infty
}^{(d)}\, \bigl(
\delta\bigl(0, \xi(i)\bigr) \bigr)_{i \in\mathbb{N}},
\]
jointly with $(a_n/n) \cdot\mathcal{T}_n \mathop{ \longrightarrow
}\limits ^{(d)} \mathcal{T}$.

Using in addition the convergence of the $\tau_n (i,j)$ shown in
Proposition~\ref{TFstJointCv}, a~similar argument shows that the
preceding convergences also hold jointly with
\[
\bigl(\delta'_n \bigl(\xi_n (i),
\xi_n(j)\bigr) \bigr)_{i,j \in\mathbb
{N}} \mathop{ \longrightarrow}_{n \rightarrow\infty}^{(d)}\,
\bigl(\delta\bigl(\xi(i), \xi(j)\bigr) \bigr)_{i,j \in\mathbb{N}}.
\]
This entails the proposition.
\end{pf}

The convergence stated in Theorem~\ref{TMainThm} now follows
immediately. Indeed, Lemma~\ref{TModDist} and Corollary~\ref{TCor2}
show that
\[
\mathbb{E} \biggl[\biggl\llvert\frac{a_n}{n} \delta_n (i,j) -
\delta_n ' (i,j)\biggr\rrvert^2 \biggr]
\leq\frac{2 C a_n}{n}
\]
for all $i,j \geq0$ [recalling that $\xi_n (0) = 0$]. Thus, the
preceding proposition gives the joint convergence
\[
\biggl( \frac{a_n}{n} \mathcal{T}_n, \frac{a_n}{n}
\operatorname{Cut}_{\mathrm{v}}(\mathcal{T}_n) \biggr) \mathop{ \longrightarrow}_{n \rightarrow\infty}^{(d)}\, \bigl(\mathcal{T}, \operatorname{Cut}_{\mathrm{v}}(
\mathcal{T}) \bigr).
\]

%s4 #&#
\section{The finite variance case} \label{SBrownianCase}

In this section, we assume that the offspring distribution $\nu$ of
the Galton--Watson trees $\mathcal{T}_n$ has finite variance $\sigma^2$.
Theorem 23 of~\cite{AldCRT3} shows that $(\sigma/ \sqrt{n}) \cdot
\mathcal{T}
_n$ converges to the Brownian tree $\mathcal{T}^{\br}$. More precisely,
still using
the three processes described in Section~\ref{SCodingTrees} to encode
the trees $\mathcal{T}_n$, the joint convergence stated in Theorem
\ref
{TCvC,H,X} holds with $a_n = \sigma\sqrt{n}$, and limit processes
defined by $X_t = B_t$ and $H_t = 2 B_t$ for all $t \in[0,1]$. (Recall
that $B$ denotes the excursion of length 1 of the standard Brownian
motion.) Note that the normalization of $X$ is not exactly the same as
the one we used for the stable tree, since the Laplace transform of a
standard Brownian motion $B'$ is $\mathbb{E}[e^{-\lambda B'_t}] =
e^{\lambda
^2 t / 2}$. The fact that the height process $H$ is equal to $2 X$ can
be seen from the definition of $H$ as a local time, as explained in
\cite{DuqLG02}, Section~1.2.

Given these results, the proof of Theorem~\ref{TBrownianCase} follows
the same structure as that of the main theorem. We first note that the
results on the modified distance, introduced in Section~\ref
{SModDist}, still hold. In the next two sections, we will see that we
also have analogues for Proposition~\ref{TFstJointCv}, and Corollaries
\ref{TCor1} and~\ref{TCor2}.

%s4.1 #&#
\subsection{Convergence of the component masses}

We use the same notation as in Section~\ref{SFstJointCv}. Recall in
particular that $\mu_{n,\xi_n (i)}$ denotes the mass of the component
$\mathcal{T}_{n,\xi_n (i)} (t)$, and that $\tau_n (i,j)$ denotes the first
time when the components $\mathcal{T}_{n,\xi_n (i)} (t)$ and
$\mathcal{T}_{n,\xi_n
(j)} (t)$ become disjoint. To simplify, we drop the superscript $\br$
for the quantities associated to the Brownian tree (e.g., the
mass-measure, the mass of a component, etc.), keeping the notation we
used in the case of the stable tree. Our first step is to prove the
following result.

%pr4.1 #&#
%
\begin{prop} \label{TJointCvBr}
As $n \rightarrow\infty$, we have the following weak convergences:
\begin{eqnarray*}
\frac{\sigma}{\sqrt{n}} \mathcal{T}_n  &\displaystyle\mathop{ \longrightarrow
} ^{(d)}& \mathcal{T}^{\br},
\\
\bigl(\tau_n (i,j) \bigr)_{i,j \geq0}  &\displaystyle\mathop{ \longrightarrow
} ^{(d)}& \biggl( \biggl(1+\frac{1}{\sigma^2}
\biggr)^{-1} \tau(i,j) \biggr)_{i,j \geq0},
\\
\bigl(\mu_{n,\xi_n (i)} (t) \bigr)_{i \geq0, t \geq0}  &\displaystyle\mathop{ \longrightarrow} ^{(d)}& \biggl(\mu_{\xi(i)} \biggl(
\biggl(1+\frac
{1}{\sigma
^2} \biggr) t \biggr) \biggr)_{i \geq0, t \geq0},
\end{eqnarray*}
where the three hold jointly.
\end{prop}

We begin by showing the same kind of property as in Lemma~\ref{TCvXtilde}.
For all $n \in\mathbb{N}$, we let $\widetilde{X}{}^{(n)}$ and $\widetilde
{C}{}^{(n)}$
denote the rescaled Lukasiewicz path and contour function of the
symmetrized tree $\widetilde{\mathcal{T}}_n$.

%le4.2 #&#
%
\begin{lem}
We have the joint convergence
\[
\bigl(X^{(n)}, C^{(n)}, \widetilde{X}{}^{(n)},
\widetilde{C}{}^{(n)}\bigr) \mathop{ \longrightarrow}_{n \rightarrow\infty}^{(d)}\,
(X, H, \widetilde{X}, \widetilde{H} ),
\]
where $\widetilde{H}_t = H_{1-t}$ and $\widetilde{X}_t = \widetilde
{H}_t / 2$
for all $t \in[0,1]$.
\end{lem}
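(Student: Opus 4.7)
The plan is to combine Theorem \ref{T:Cv_C,H,X} in its Brownian-case form with a deterministic symmetrization identity for the contour function, and then to use a tightness argument to propagate the joint convergence to the Lukasiewicz path. The key simplification compared to the $\alpha$-stable setting of Lemma \ref{T:Cv_Xtilde} is that, in the Brownian limit, the contour process and the Lukasiewicz path are proportional ($H = 2X$), so the symmetrization, which is nothing but a time reversal on the contour side, transfers to the Lukasiewicz side automatically.

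First I would record the deterministic identity $\tilde{C}^{[n]}_s = C^{[n]}_{2n - s}$ for $s \in [0,2n]$, which holds because reversing the child order at every vertex of $\T_n$ reverses the depth-first exploration. After rescaling this becomes $\tilde{C}^{(n)}_t = C^{(n)} \bigl( (1 - t - 2/(2n+2)) \vee 0 \bigr)$ for $t \in [0,1]$, with a shift of order $1/n$ and a boundary effect on an interval of length $1/(n+1)$. By Theorem \ref{T:Cv_C,H,X} applied to $\T_n$ and the Skorokhod representation theorem, we may assume that $(X^{(n)}, C^{(n)}) \to (B, 2B)$ holds almost surely, and since the limit $2B$ is continuous the convergence of $C^{(n)}$ is uniform on $[0,1]$. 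Passing to the limit in the above identity then gives $\tilde{C}^{(n)} \to 2B_{1-\cdot}$ uniformly, jointly with the convergences of $X^{(n)}$ and $C^{(n)}$.

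To handle $\tilde{X}^{(n)}$, I would argue by tightness and identification of subsequential limits. Since $\tilde{\T}_n$ has the same law as $\T_n$, applying Theorem \ref{T:Cv_C,H,X} to $\tilde{\T}_n$ yields in particular that the sequence of laws of $(\tilde{X}^{(n)}, \tilde{C}^{(n)})$ is tight in the product of the Skorokhod space on $[0,1]$ and $C([0,1])$, with every subsequential limit of the form $(B', 2B')$ for some $B' \eql B$. Combined with the preceding step, the laws of $(X^{(n)}, C^{(n)}, \tilde{X}^{(n)}, \tilde{C}^{(n)})$ form a tight family. Let $(X^\infty, H^\infty, \tilde{X}^\infty, \tilde{H}^\infty)$ denote any subsequential limit: then $(X^\infty, H^\infty) = (B, 2B)$ and $H^\infty = 2 X^\infty$ almost surely; from the symmetrization identity we have $\tilde{H}^\infty_t = H^\infty_{1-t}$ almost surely; and from Theorem \ref{T:Cv_C,H,X} applied to $\tilde{\T}_n$ we also have $\tilde{H}^\infty = 2 \tilde{X}^\infty$ almost surely. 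Combining, $\tilde{X}^\infty_t = \tilde{H}^\infty_t / 2 = H^\infty_{1-t}/2 = X^\infty_{1-t} = B_{1-t}$, which identifies the joint law uniquely as $(B, 2B, B_{1-\cdot}, 2B_{1-\cdot})$. Since every subsequential limit has this same law, the full sequence converges.

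The main obstacle is really the joint convergence involving $\tilde{X}^{(n)}$: in the $\alpha$-stable case (Lemma \ref{T:Cv_Xtilde}), this required a delicate jump-by-jump matching between $X^{(n)}$ and $\tilde{X}^{(n)}$, since the jump times of the two limiting processes are related through the non-trivial identity $\tilde{s}_i = 1 - s_i - l(s_i)$ involving excursions of $X$ above its infimum. In the Brownian case everything is continuous and the relation $H = 2X$ in the limit lets us lift the obvious time-reversal symmetry of the contour function back to the Lukasiewicz path for free, so the proof collapses to tightness plus a short identification-of-limits argument.
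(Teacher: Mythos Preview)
Your proposal is correct and follows essentially the same route as the paper: tightness of the four-tuple from the equality in law of $\T_n$ and $\tilde{\T}_n$, the time-reversal identity for the contour function to pin down $\tilde{H}_t = H_{1-t}$, and the relation $\tilde{H} = 2\tilde{X}$ inherited from Theorem~\ref{T:Cv_C,H,X} applied to $\tilde{\T}_n$ to identify the subsequential limit uniquely. You are in fact a bit more careful than the paper about the $O(1/n)$ shift in the rescaled identity $\tilde{C}^{(n)}_t \approx C^{(n)}_{1-t}$, but otherwise the arguments coincide.
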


\begin{pf} Since $\mathcal{T}_n$ and $\widetilde{\mathcal{T}}_n$ have
the same law,
$(\widetilde{X}{}^{(n)}, \widetilde{C}{}^{(n)})$ converges in distribution
to a
couple of processes having the same law as $(X,H)$ in $\mathbb{D}
\times\mathbb{D}$. Thus, the sequence of the laws of the processes
$(X^{(n)}, C^{(n)}, \widetilde{X}{}^{(n)}, \widetilde{C}{}^{(n)})$ is tight in
$\mathbb{D}^4$. Up to extraction, we can assume that $(X^{(n)},
C^{(n)}, \widetilde{X}{}^{(n)}, \widetilde{C}{}^{(n)})$ converges in distribution
to $(X, H,\widetilde{X}, \widetilde{H})$.

Fix\vspace*{1.5pt} $t\in[0,1]$. The definition of the contour function shows that for
all $n \in\mathbb{N}$, we have $\widetilde{C}{}^{(n)}_t = C^{(n)}_{1-t}$. Since
$H$ and $\widetilde{H}$ are $\mbox{a.s.}$ continuous, taking the limit yields
$\widetilde{H}_t = H_{1-t}$ almost surely.\vspace*{1pt} Besides, since $(X,H)$ and
$(\widetilde{X},\widetilde{H})$ have the same law, we have $\widetilde
{X}_t =
\widetilde{H}_t /2$ a.s. for all $t \in[0,1]$.

These equalities also hold $\mbox{a.s.}$, simultaneously for a countable
number of times~$t$, and the continuity of $H$, $X$, $\widetilde{H}$ and
$\widetilde{X}$ give that $\mbox{a.s.}$, they hold for all $t \in
[0,1]$. This
identifies uniquely the law of $(X,H, \widetilde{X}, \widetilde{H})$, hence
the lemma.
\end{pf}

This lemma shows that we can still work in the setting of
%
%e31 #&#
%
\begin{equation}
\label{HasCvBr} \cases{
\displaystyle \bigl( X^{(n)}, \widetilde{X}{}^{(n)}
\bigr) \mathop{ \longrightarrow}_{n\rightarrow\infty}\, (X, \widetilde{X}
)\qquad\mbox{a.s.},
\vspace*{3pt}\cr
\displaystyle\bigl(t^{(n)}_i, i \in\mathbb{N} \bigr) \mathop{ \longrightarrow}_{n \rightarrow\infty}\, (t_i, i \in\mathbb{N} )\qquad
\mbox{a.s.},}
\end{equation}
where $t^{(n)}_i = (\xi_n (i)+1)/(n+1)$ for all $n \in\mathbb{N}$,
$i \geq
0$, and $(t_i, i \in\mathbb{N})$ is a sequence of independent uniform
variables in $[0,1]$ such that $\xi(i) = p (t_i)$.

Recall the notation $\mathcal{R}_n (k)$ for the shape of the subtree
of $\mathcal{T}_n$ (or $\mathcal{T}^{\br}$ if $n = \infty$) spanned
by the root and the
vertices $\xi_n (1), \ldots, \xi_n (k)$ [or $\xi(1), \ldots, \xi
(k)$ if $n = \infty$]. We also keep the notation $L_n (v)= \deg(v,
\mathcal{T}
_n) / a_n$ for the rate at which a vertex $v$ is deleted in $\mathcal
{T}_n$ (if
$n \in\mathbb{N}$), and
\begin{eqnarray*}
\sigma_n (t) &=& \mathop{\sum_{0 < s < t}}_{X^{(n)}_{s-} <
I^{(n)}_{\st}}
\Delta X^{(n)}_s \qquad\forall t \in[0,1],
\end{eqnarray*}
where $I^{(n)}_{s,t} = \inf_{s < u < t} X^{(n)}_u$, and $X^{(\infty)}
= X$.

As in Section~\ref{SFstJointCv}, we state two lemmas which allow us
two control the rates at which the fragmentations happen on the
vertices and the edges of $\mathcal{R}_n (k)$.

%le4.3 #&#
%
\begin{lem} \label{TCvLBr}
Fix $k \in\mathbb{N}$. Under (\ref{HasCvBr}), $\mathcal{R}_n (k)$
is $\mbox{a.s.}$
constant for all $n$ large enough (say $n \geq N$). Identifying the
vertices of $\mathcal{R}_n (k)$ with $\mathcal{R}_{\infty} (k)$ for
all $n \geq N$, we have the $\mbox{a.s.}$ convergence
\begin{eqnarray*}
&&L_n (v) \mathop{ \longrightarrow}_{n \rightarrow\infty} 0 \qquad\forall v
\in
V \bigl(\mathcal{R}_{\infty} (k)\bigr).
\end{eqnarray*}
\end{lem}

\begin{pf}
The proof is the same as that of Lemma~\ref{TCvL}. In particular, we
get that if the $b^{(n,k)}$ are the times encoding the ``same'' vertex
$v$ of $R_n (k)$, for $n \geq N$, then we have the $\mbox{a.s.}$ convergences
\begin{eqnarray*}
b^{(n,k)}  &\displaystyle\mathop{ \longrightarrow}_{n \rightarrow\infty}& b^{(\infty,k)},
\\
X^{(n)}_{b^{(n,k)}}  &\displaystyle\mathop{ \longrightarrow}_{n \rightarrow\infty}&
X_{b^{(\infty,k)}},
\\
X^{(n)}_{(b^{(n,k)})^-}  &\displaystyle\mathop{ \longrightarrow}_{n \rightarrow\infty
}& X_{(b^{(\infty,k)})^-}.
\end{eqnarray*}
Since $X$ is now continuous, this yields
\begin{eqnarray*}
L_n (v) &=& \Delta X^{(n)}_{b^{(n,k)}} +
\frac{1}{a_n} \mathop{ \longrightarrow}_{n \rightarrow\infty} \Delta
X_{b^{(\infty,k)}} =
0.
\end{eqnarray*}\upqed
\end{pf}

%le4.4 #&#
%
\begin{lem} \label{TCvsigmaBr}
Let $(b_n)_{n \geq1} \in[0,1]^{\mathbb{N}}$ be a converging sequence in
$[0,1]$, and let $b$ denote its limit. Then
\begin{eqnarray*}
&&\sigma_n (b_n) \mathop{ \longrightarrow}_{n \rightarrow\infty}
H_b \qquad\mbox{a.s.}
\end{eqnarray*}
\end{lem}

\begin{pf}
As in the proof of Lemma~\ref{TCvsigma}, for all $n \in\mathbb
{N}\cup\{
\infty\}$, we write $\sigma_n (t) = \sigma_n^- (t) + \sigma_n^+
(t)$, where
\begin{eqnarray*}
&&\sigma_n^+ (t) = \mathop{\sum_{0 < s < t}}_{X^{(n)}_{s-} <
I^{(n)}_{\st}}
\bigl(X^{(n)}_{s} - I^{(n)}_{s,t} \bigr)
\quad\mbox{and} \quad\sigma_n^- (t) = \mathop{\sum
_{0 < s < t}}_{X^{(n)}_{s-} <
I^{(n)}_{\st}} \bigl(I^{(n)}_{s,t}
- X^{(n)}_{s^-} \bigr).
\end{eqnarray*}
For all $t \geq0$, $n \in\mathbb{N}$, we have $\sigma_n^- (t) =
X^{(n)}_{t^-}$. As a consequence, (\ref{HasCvBr}) gives
\begin{eqnarray*}
&&\sigma_n^- (b_n) \mathop{ \longrightarrow}_{n \rightarrow\infty}
X_b \qquad\mbox{a.s.}
\end{eqnarray*}
Besides, we still have $\sigma_n^+ (b_n) = \tilde{\sigma}_n^- (
\tilde{b}_n )$, with
\begin{eqnarray*}
&&\tilde{b}_n = 1-b_n+\frac{1}{n+1} \bigl(1 +
H^{[n]}_{(n+1) b_n -
1}-D^{[n]}_{(n+1) b_n - 1} \bigr).
\end{eqnarray*}
Now
\begin{eqnarray*}
&&\tilde{b}_n \mathop{ \longrightarrow}_{n \rightarrow\infty} 1 - b - l(b),
\end{eqnarray*}
where $l(b) = \inf\{ s>b\dvtx  X_s = X_{b}\} - b$. Using (\ref{HasCvBr})
again, we get
\begin{eqnarray*}
&&\sigma_n^+ (b_n) \mathop{ \longrightarrow}_{n \rightarrow\infty}
\widetilde{X}_{1-b-l(b)} = X_{b+l(b)} = X_b \qquad
\mbox{a.s.}
\end{eqnarray*}
Thus, we have the $\mbox{a.s.}$ convergence
\begin{eqnarray*}
&&\sigma_n (b_n) \mathop{ \longrightarrow}_{n \rightarrow\infty} 2
X_b = H_b.
\end{eqnarray*}\upqed
\end{pf}

We can now give the proof of Proposition~\ref{TJointCvBr}.

\begin{pf*}{Proof of Proposition~\ref{TJointCvBr}}
Fix $n \in\mathbb{N}\cup\{\infty\}$. As in the proof of Proposition~\ref{TFstJointCv}, we write $\mathcal{R}_n (k,t)$ for the reduced tree
with edge-lengths, endowed with point processes of marks on its edges
and vertices such that:
\begin{itemize}
\item The marks on the vertices of $\mathcal{R}_n (k)$ appear at the
same time as the marks on the corresponding vertices of $\mathcal{T}_n$.
\item Each edge receives a mark at its midpoint at the first time when
a vertex $v$ of $\mathcal{T}_n$ such that $v \in e$ is marked in
$\mathcal{T}_n$.
\end{itemize}
These two point processes are independent, and their rates are the following:
\begin{itemize}
\item If $n \in\mathbb{N}$, each vertex $v$ of $\mathcal{R}_n (k)$
is marked
at rate $L_n (v)$, independently of the other vertices. If $n = \infty
$, there are no marks on the vertices.
\item For each edge $e$ of $\mathcal{R}_n (k)$, letting $b, b'$ denote
the points of $B_n (k)$ corresponding to $e^-, e^+$, the edge $e$ is
marked at rate $\Sigma L_n (e)$, independently of the other edges, with
\begin{eqnarray*}
\Sigma L_n (e) & =& \sum_{v \in V (\mathcal{T}_n) \cap e}
L_n (v)
\\
& =& \sigma_n \bigl(b'\bigr) - \sigma_n (b)
+ \frac{n}{a_n^2} \bigl( H^{(n)}_{(b')^-} -
H^{(n)}_{b^-} \bigr) - L_n \bigl(e^-\bigr)
\end{eqnarray*}
if $n \in\mathbb{N}$, and
\begin{eqnarray*}
&&\Sigma L_{\infty} (e) = H_{b'} - H_b.
\end{eqnarray*}
\end{itemize}
We see from Lemmas~\ref{TCvLBr} and~\ref{TCvsigmaBr} that $L_n (v)$
converges to $0$ as $n \rightarrow\infty$, and that
\begin{eqnarray*}
&&\Sigma L_n (e) \mathop{ \longrightarrow}_{n \rightarrow\infty}\, \biggl(1 +
\frac
{1}{\sigma^2} \biggr) \Sigma L_{\infty} (e).
\end{eqnarray*}
As a consequence, we have the convergence
%
%e32 #&#
%
\begin{equation}
\label{ECvRBr} \biggl( \frac{a_n}{n} \mathcal{R}_n (k,t), t
\geq0 \biggr) \mathop{ \longrightarrow}_{n \rightarrow\infty}^{(d)}\, \biggl(
\mathcal{R}_{\infty} \biggl(k, \biggl(1+\frac{1}{\sigma^2} \biggr)t
\biggr), t \geq0 \biggr).
\end{equation}
[As in the case $\alpha\in(1,2)$, $(a_n/n) \cdot\mathcal{R}_n
(k,t)$ and $\mathcal{R}_{\infty} (k,t)$ can be seen as random
variables in $\mathbb{T} \times(\mathbb{R}_+ \cup\{-1\})^{\mathbb
{N}} \times\{
-1,0,1\}^{\mathbb{N}^2}$.]

For all $i \in\mathbb{N}$, we let $\eta_n (k,i,t)$ denote the number of
vertices among $\xi_n (1), \ldots,  \xi_n (k)$ in the component of
$\mathcal{R}_n (k)$ containing $\xi_n (i)$ at time $t$, and similarly
$\eta_{\infty} (k,i,t)$ the number of vertices among $\xi(1), \ldots,
\xi(k)$ in the component of~$\mathcal{R}_{\infty} (k)$ containing
$\xi(i)$ at time $t$. It follows from (\ref{ECvRBr}) that we have the
joint convergences
\begin{eqnarray*}
\frac{a_n}{n} \mathcal{T}_n &\displaystyle \mathop{ \longrightarrow} ^{(d)}&
\mathcal{T}^{\br},
\\
\bigl(\eta_n (k,i,t)\bigr)_{t \geq0, i \in\mathbb{N}} &\displaystyle \mathop{\longrightarrow
} ^{(d)}& \biggl(\eta_{\infty} \biggl(k,i,
\biggl(1+\frac{1}{\sigma^2} \biggr)t \biggr) \biggr)_{t \geq0, i \in
\mathbb{N}},
\\
\bigl(\tau_n (i,j)\bigr)_{i,j \in\mathbb{N}} &\displaystyle \mathop{ \longrightarrow} ^{(d)}& \biggl( \biggl(1+\frac
{1}{\sigma^2}
\biggr)^{-1}\tau(i,j) \biggr)_{i,j \in\mathbb{N}}.
\end{eqnarray*}
The end of the proof is the same as for Proposition~\ref{TFstJointCv}.
\end{pf*}

%s4.2 #&#
\subsection{Upper bound for the expected component mass}

The second step is to show that, as in Section~\ref{SKeyEstimates},
the following properties hold.

%le4.5 #&#
%
\begin{lem} \label{TKEBr}
It holds that
\begin{eqnarray*}
&&\lim_{l \rightarrow\infty} \sup_{n \in\mathbb{N}} \mathbb{E} \biggl[
\int_{2^l}^{\infty} \mu_{n,\xi_n} (t) \,dt \biggr] =
0.
\end{eqnarray*}
Besides, there exists a constant $C$ such that, for all $n \in\mathbb{N}$,
\begin{eqnarray*}
&&\mathbb{E} \bigl[\delta'_n (0,\xi_n)
\bigr] \leq C.
\end{eqnarray*}
\end{lem}

\begin{pf}
We use the fact that there exists a natural coupling between the
edge-fragmentation and the vertex-fragmentation of $\mathcal{T}_n$. Indeed,
both can be obtained by a deterministic procedure, given $\mathcal
{T}_n$ and a
uniform permutation $(i_1,\ldots,i_n)$ of $\{1,\ldots,n\}$. More
precisely, in the edge-fragmentation, we delete the edge $e_{i_k}$ at
each step $k$, thus splitting $\mathcal{T}_n$ into at most two connected
components, whereas in the vertex fragmentation, we delete all the
edges such that $e^- = e_{i_k}^-$. Thus, at each step, the connected
component containing a given edge $e$ for the vertex-fragmentation is
included in the component containing $e$ for the edge-fragmentation.

Now consider the continuous-time versions of these fragmentations: each
edge is marked independently with rate $a_n/n = \sigma/\sqrt{n}$ in
our case, and $1/\sqrt{n}$ in \cite{BerMi}. We let $\mathcal{T}_{n,i}^{E}
(t)$ and $\mathcal{T}_{n,i}^{V} (t)$ denote the connected components containing
the edge $e_i$ at time $t$, respectively, for the edge-fragmentation
and the vertex-fragmentation. Then the preceding remark shows that
there exists a coupling such that $\mathcal{T}_{n,i}^{V} (t) \subset
\mathcal{T}
_{n,i}^{E} (\sigma t)$ a.s., and thus $\mu_n (\mathcal
{T}_{n,i}^{V} (t))
\leq\mu_n (\mathcal{T}_{n,i}^{E} (\sigma t))$ almost surely.

Lemma 3 and Corollary 1 of \cite{BerMi} show that the two announced
properties hold for the case of the edge-fragmentation. Therefore, they
also hold for the vertex-fragmentation.
\end{pf}

%s4.3 #&#
\subsection{Proof of Theorem \texorpdfstring{\protect\ref{TBrownianCase}}{1.4}}

As before, the proof of Theorem~\ref{TBrownianCase} now relies on
showing a joint convergence for the rescaled versions of $\mathcal
{T}_n$ and
the modified cut-tree $\operatorname{Cut}_{\mathrm{v}}(\mathcal{T}_n)$:
%
%e33 #&#
%
\begin{equation}
\label{EJointCvCut} \biggl( \frac{a_n}{n} \mathcal{T}_n, \biggl(1+
\frac{1}{\sigma
^2} \biggr) \operatorname{Cut}_{\mathrm{v}}' (\mathcal{T}_n)
\biggr) \mathop{ \longrightarrow}_{n \rightarrow
\infty}^{(d)}\, \bigl(
\mathcal{T}^{\br}, \operatorname{Cut}\bigl(\mathcal{T}^{\br}
\bigr) \bigr)
\end{equation}
in $\mathbb{M} \times\mathbb{M}$. Indeed, Lemma~\ref{TModDist} and
the second part of Lemma~\ref{TKEBr} show that
\begin{eqnarray*}
&&\mathbb{E} \biggl[\biggl\llvert\frac{a_n}{n} \delta_n (i,j) -
\delta_n ' (i,j)\biggr\rrvert^2 \biggr]
\leq\frac{2 C a_n}{n}
\end{eqnarray*}
for all $i,j \geq0$. Thus, (\ref{EJointCvCut}) entails the joint convergence
\begin{eqnarray*}
&&\biggl( \frac{a_n}{n} \mathcal{T}_n, \frac{a_n}{n} \biggl(1+
\frac
{1}{\sigma
^2} \biggr) \operatorname{Cut}_{\mathrm{v}}(\mathcal{T}_n) \biggr)
\mathop{ \longrightarrow}_{n
\rightarrow\infty}^{(d)}\, \bigl(\mathcal{T},
\operatorname{Cut}_{\mathrm{v}}(\mathcal{T}) \bigr).
\end{eqnarray*}
Since $a_n = \sigma\sqrt{n}$, this gives Theorem~\ref{TBrownianCase}.

Let us finally justify why (\ref{EJointCvCut}) holds. Proposition~\ref
{TJointCvBr} shows that for every fixed integer $l$, there is the joint
convergence
\begin{eqnarray*}
\frac{a_n}{n} \mathcal{T}_n &\displaystyle \mathop{ \longrightarrow}_{n \rightarrow
\infty}^{(d)}&
\mathcal{T}^{\br},
\\
\Biggl( 2^{-l} \sum_{j=1}^{4^l}
\mu_{n,\xi_n (i)} \bigl(j 2^{-l} \bigr) \Biggr)_{i \in\mathbb{N}} &\displaystyle \mathop{ \longrightarrow}_{n
\rightarrow\infty}^{(d)}& \Biggl( 2^{-l} \sum
_{j=1}^{4^l} \mu_{\xi(i)}
\bigl(C_{\sigma} j 2^{-l} \bigr) \Biggr)_{i \in\mathbb{N}},
\end{eqnarray*}
where $C_{\sigma} = 1+ 1/\sigma^2$. Using the upper bound (\ref
{EMajIntf}) and the first part of Lemma~\ref{TKEBr}, we get that
\begin{eqnarray*}
&&\lim_{l \rightarrow\infty} \sup_{n \in\mathbb{N}} \mathbb{E} \Biggl[
\Biggl\llvert\int_0^{\infty} \mu_{n,\xi_n (i)}
(t) \,dt - 2^{-l} \sum_{j=1}^{4^l}
\mu_{n,\xi_n (i)} \bigl(j2^{-l} \bigr)\Biggr\rrvert\Biggr] = 0,
\end{eqnarray*}
and these expectations do not depend on $i$. Proposition 3.1 of \cite
{BerMi} shows that $\delta(0,\xi(i))$ has the same law as $d (0,\xi
(i))$ and, therefore, has finite mean. Thus,
\begin{eqnarray*}
&&\Biggl\llvert\int_0^{\infty} \mu_{\xi(i)}
(C_{\sigma} t ) \,dt - 2^{-l} \sum_{j=1}^{4^l}
\mu_{\xi(i)} \bigl(C_{\sigma} j2^{-l} \bigr)\Biggr\rrvert
 \leq\underbrace{2^{-l} + \mathbb{E} \biggl[\int_{2^l}^{\infty}
\mu_{\xi(i)} (C_{\sigma} t ) \,dt \biggr]}_{\mathop{\longrightarrow}\limits_{l \rightarrow\infty} 0},
\end{eqnarray*}
and the left-hand side does not depend on $i$. Since
\begin{eqnarray*}
&&\int_0^{\infty} \mu_{\xi(i)}
(C_{\sigma} t ) \,dt = C_{\sigma}^{-1} \int
_0^{\infty} \mu_{\xi(i)} (t) \,dt =
C_{\sigma
}^{-1} \delta\bigl(0,\xi(i)\bigr),
\end{eqnarray*}
we conclude that
\begin{eqnarray*}
&&\bigl(C_{\sigma} \delta'_n \bigl(0,
\xi_n(i)\bigr) \bigr)_{i \in\mathbb
{N}} \mathop{ \longrightarrow}_{n \rightarrow\infty}^{(d)}\,
\bigl(\delta\bigl(0, \xi(i)\bigr) \bigr)_{i \in
\mathbb{N}},
\end{eqnarray*}
jointly with $(a_n/n) \cdot\mathcal{T}_n \mathop{ \longrightarrow
}\limits ^{(d)} \mathcal{T}$. Using in addition
the convergence of the $\tau_n (i,j)$ shown in Proposition~\ref
{TFstJointCv}, we see that the preceding convergences also hold jointly with
\begin{eqnarray*}
&&\bigl( C_{\sigma} \delta'_n \bigl(
\xi_n (i), \xi_n(j)\bigr) \bigr)_{i,j
\in\mathbb{N}} \mathop{ \longrightarrow}_{n \rightarrow\infty}^{(d)}\, \bigl(\delta\bigl(\xi(i), \xi(j)
\bigr) \bigr)_{i,j \in\mathbb{N}},
\end{eqnarray*}
and this gives the convergence (\ref{EJointCvCut}).

\setcounter{teo}{0}
\begin{appendix}\label{app}
%s5 #&#
\section*{Appendix: Adaptation of Doney's result}

We rephrase Lemma~\ref{TDoney} using the notation of \cite{Don}.

%le5.1 #&#
%
\begin{lem}
Let $(X_i)_{i \in\mathbb{N}}$ be a sequence of i.i.d.
variables in $\mathbb{N}\cup
\{0\}$, whose law belongs to the domain of attraction of a stable law
of index $\halpha\in(0,1)$, and $S_n = X_1 + \cdots+ X_n$. We also
let $A \in R_{\halpha}$ be a positive increasing function such that
%
%e34 #&#
%
\begin{equation}
\label{HDoney1} \mathbb{P} (X > r ) \sim\frac{1}{A (r)},
\end{equation}
and $a$ the inverse function of $A$. Besides, we suppose that the
additional hypothesis
%
%e35 #&#
%
\begin{equation}
\label{HDoney2} \sup_{r \geq1} \biggl(\frac{r \mathbb{P}(X = r)}{\mathbb
{P}(X >
r)} \biggr) <
\infty
\end{equation}
holds. Then there exist constants $B, C$ such that for all $r \in
\mathbb{N}
$, for all $n$ such that $r / a_n \geq B$,
\[
\mathbb{P} (S_n = r ) \leq C \frac{n}{r A (r)}.
\]
\end{lem}

This result is an adaptation of a theorem shown by Doney in \cite
{Don}, which gives an equivalent for $\mathbb{P} (S_n = r
)$ as $n\rightarrow
\infty$, uniformly in $n$ such that $r / a_n \rightarrow\infty$,
using the slightly stronger hypothesis
\[
\mathbb{P} (X=r ) \sim\frac{1}{r A (r)} \qquad\mbox{as } r \rightarrow
\infty
\]
instead of (\ref{HDoney2}).

\begin{pf*}{Sketch of the proof}
The main idea is to split up $\mathbb{P} (S_n = r )$ into
four terms, depending
upon the values taken by $M_n = \max\{X_i\dvtx  i=1, \ldots, n \}$ and
$N_n = \llvert\{ m \leq n\dvtx  X_m > z \}\rrvert$. More
precisely, letting
$\eta$ and $\gamma$ be constants in $(0,1)$, $w = r / a_n$ and $z =
a_n w^{\gamma}$, we have
\[
\mathbb{P} (S_n = r ) = \sum_{i=0}^3
\mathbb{P} \bigl(\{ S_n = r\} \cap A_i \bigr),
\]
where $A_i = \{M_n \leq\eta r, N_n = i \}$ for $i = 0,1$, $A_2 = \{M_n
\leq\eta r, N_n \geq2 \}$ and $A_3 = \{M_n > \eta r \}$. For our
purposes, it is enough to show that there exist constants $c_i$ such that
\[
q_i:= \mathbb{P} \bigl(\{S_n = r\} \cap
A_i \bigr) \leq c_i \frac
{n}{r A(r)} \qquad\forall i
\in\{0,1,2,3\}.
\]

The constants $\gamma$ and $\eta$ are fixed, with conditions that
will be given later (see the detailed version of the proof for explicit
conditions). In the whole proof, we suppose that $w \geq B$, for $B$
large enough (possibly depending on the values of $\eta$ and $\gamma
$). Note that hypotheses (\ref{HDoney1}) and (\ref{HDoney2}) imply
the existence of a constant $c$ such that
%
%e36 #&#
%
\begin{equation}
\label{EMajPr&Fr} p_r = \mathbb{P} (X=r ) \leq\frac{c}{r A(r)}
\quad\mbox{and}\quad\overline{F} (r) = \mathbb{P} (X>r ) \leq\frac
{c}{A (r)}.
\end{equation}

The first calculations of \cite{Don} show that we have the following
inequalities:
\begin{eqnarray*}
q_3 &\leq& n \sup_{l > \eta r} p_l,
\\
q_2 &\leq&\frac{1}{2} n^2 \overline{F} (z) \sup
_{l>z} p_l,
\\
q_1 &\leq& n \mathbb{P} \bigl(M_{n-1} \leq z,
S_{n-1} > (1-\eta) r \bigr) \sup_{l>z}
p_l.
\end{eqnarray*}
We now use (\ref{EMajPr&Fr}), and apply Lemma~\ref{TLemmeFVR} for the
regularly varying function $A$. The first inequality thus yields the
existence of a constant $c_3$ which only depends on the value of $\eta
$. Similarly, the second inequality gives the existence of $c_2$,
provided~$\gamma$ is large enough (independently of $B$) and $B \geq1$.

To get the existence of $c_1$, we first apply Lemma 2 of \cite{Don},
which gives an upper bound for the quantity $\mathbb{P} (M_{n-1}
\leq z, S_{n-1} > (1-\eta) r )$ provided $z$ is large enough and
$(1-\eta)r
\geq z$. Since $a_1 w^{\gamma} \leq z \leq r / w^{1-\gamma}$, these
conditions can be achieved by taking $B$ large enough. The lemma gives
\[
q_1 \leq c \frac{n}{z A(z)} \cdot\biggl(\frac{c' z}{(1-\eta)
r}
\biggr)^{(1-\eta)r/z}, % = c'_1 \frac{n}{r A(r)} \cdot\frac{r A(r)}{z
%A(z)} \cdot\(\frac{c
%z}{(1-\eta) r}\)^{(1-\eta)r/z}.
\]
where $c'$ is a constant. Now, applying Lemma~\ref{TLemmeFVR}, we get
the existence of a constant $c'_1$ such that
\[
q_1 \leq c'_1 \frac{n}{r A(r)} \cdot
w^{\kappa},
\]
where $\kappa$ depends on the values of $\eta$, $\gamma$ and $B$.
For a given choice of $\eta$ and $\gamma$, and for $B$ large enough,
$\kappa$ is negative, hence the existence of $c_1$.

For $q_0$, getting the upper bound goes by first showing that we can
work under the hypotheses $r \leq nz$ and $r \leq n a_n/2$ (instead of
the hypotheses $n \rightarrow\infty$ and \mbox{$r / n a_n \rightarrow0$} of
\cite{Don}). Indeed, if $r > nz$, then $q_0 = 0$, and if $r > n
a_n/2$, another application of Lemma 2 of \cite{Don} and of Lemma~\ref
{TLemmeFVR} yields the result. The rest of the proof relies on
replacing the $X_i$ by truncated variables $\widehat{X}_i$, and using an
exponentially biased probability law. This last part is long and
technical, but it is rather easy to check that each step still holds
with our hypotheses, for $B$ large enough and with an appropriate
choice of $\eta$ (independently of $B$).
\end{pf*}
\end{appendix}

% zodis "Acknowledgments" paliekamas pagal autoriu
\section*{Acknowledgement}
I would like to thank Gr\'{e}gory
Miermont for many insightful comments and very thorough proofreadings.

%\begin{supplement}[id=suppA]
%\sname{Supplement A}
%\stitle{}
%\slink[doi]{10.1214/00-AAPXXXXSUPP} %[doi,text={...}] - jei reikia
%suskaldyti doi
%\sdatatype{.pdf}
%\sfilename{aapXXXX\_supp.pdf}
%\sdescription{}
%\end{supplement}

% imsref loaded by linak, 2014-07-29 13:20:26
% imsref loaded by linak, 2014-07-29 13:38:34
%
% imsref loaded by linak, 2015-03-26 09:12:26

\printaddresses
\end{document}